\newcommand{\red}[1]{#1}\newcommand{\blue}[1]{#1}
\title{A critical threshold for \blue{the cosmological Euler-Poisson system}}
\author[D.~Fajman\orcidlink{0000-0003-3034-6232}, M.~Maliborski\orcidlink{0000-0002-8621-9761}, M.~Ofner\orcidlink{0009-0009-2271-5211}, T.~Oliynyk\orcidlink{0000-0003-3457-9578}, Z.~Wyatt\orcidlink{0000-0001-5120-1839}]{David Fajman, Maciej Maliborski, Maximilian Ofner, Todd Oliynyk, Zoe Wyatt}
\date{\today}
\titleformat{\section}[hang]{\centering \large\sc}{\thesection}{1em}{}[]
\titleformat{\subsection}[hang]{ \bfseries}{\thesubsection}{1em}{}[]
\newcommand{\laplacian}{\Delta}
\newtheoremstyle{lemma}{\topsep}{\topsep}{\itshape}{}{\sc}{.}{5pt}{}
\theoremstyle{lemma}
\newtheorem{lemma}{Lemma}
\numberwithin{lemma}{section}
\theoremstyle{definition}
\newtheorem{definition}{Definition}
\theoremstyle{plain}
\newtheorem{theorem}{Theorem}
\theoremstyle{remark}
\newtheorem{remark}{Remark}
\begin{document}
	\maketitle
	
	\begin{abstract}

We consider the gravitational Euler-Poisson system with a linear equation of state on an expanding cosmological model of the Universe.  The expansion of the spatial sections  introduces an additional dissipating \blue{effect} in the Euler equation. We prescribe the expansion rate of space by a scale factor $a(t)=t^\alpha$ with $\alpha\in(0,1)$, which describes the growth of length scales over time. This model is regularly applied in cosmology to study classical fluids in an expanding Universe.

We study the behaviour of solutions to this system arising from small, near-homogeneous initial data and discover a \emph{critical} change of behaviour near the expansion rate $\alpha=2/3$, which corresponds to the matter-dominated regime in cosmology.
In particular, we prove that for $\alpha>2/3$ the fluid variables are global in time and remain small provided they are sufficiently small in a suitable norm initially. In the complementary regime $\alpha\leq2/3$, we present numerical evidence for shock formation of solutions to the Euler equation for arbitrarily small initial data. 
In combination, this establishes the existence of a critical stability threshold for barotropic fluids in expanding domains. In contrast to our previous work on the corresponding relativistic system \cite{fajman2024arxiv}, the threshold in the classical system considered here is independent of the speed of sound of the fluid. This establishes that fluids in cosmology behave fundamentally different in the non-relativistic regime than in the relativistic one.
\end{abstract}

\section{Introduction}

The Euler-Poisson system, with an attractive gravitational force, describes the dynamics of self-gravitating fluids in astrophysical and cosmological settings  (e.g.~\cite{BT2008}, Appendix F). On cosmological scales, fluids are used to model the dynamics in the post-inflationary Universe  (e.g.~\cite{Baumann}). In Newtonian cosmology, the trajectory of an observer that is co-moving with expansion is given by $r(t,x)=a(t)x$, and in such coordinates, the Euler-Poisson system reads
\begin{equation}\label{introeq}
\begin{aligned}
\partial_t \rho + \nabla \cdot (\rho \mathbf{u}) +3\frac{\dot a(t)}{a(t)}\rho&= 0, \\
(\partial_t  + \mathbf{u} \cdot \nabla) \mathbf{u} +2\frac{\dot a(t)}{a(t)}\mathbf u + a(t)^{-2} \rho^{-1}\nabla p &= -a^{-2}(t)\nabla \Phi, \\
\Delta \Phi &= -4\pi a(t)^2(\rho-\overline \rho),
\end{aligned}
\end{equation}
for the energy density $\rho$ and \red{peculiar} velocity field $\mathbf u$ of the fluid and gravitational potential $\Phi$, where the sign of the potential term in the Euler equation and Poisson equation model an attractive force \cite{Baumann}. This system can be derived from the Newton-Cartan-Ehlers model of gravity, as we show below, or alternatively, by taking the Newtonian limit \footnote{Here, $c$ is the speed of light and $s$ is a characteristic speed of the gravitating matter.} $s/c \searrow 0$ of the Einstein-Euler equations \cite{Oliynyk:CMP_2010,Oliynyk:JHDE_2010}.  \\

A key goal in cosmological settings is to use the system \eqref{introeq} to model structure formation, by which matter concentrates in certain regions and thereby provides the seeds for the observable matter distribution in the current Universe.
For example, the linearized version of \eqref{introeq} leads to the well-known Jeans instability which predicts growth rates for the fluid density contrast $\hat{\rho} = \frac{\rho - \overline\rho}{\overline\rho}$ \cite{Baumann}.

The system \eqref{introeq} can be used to model the onset of structure formation from an almost homogeneous initial configuration precisely because of the tendancy for fluids to develop shocks in finite time. Shocks correspond to discontinuities in the fluid variables. On the level of the mass density this corresponds to discontinuities in the density contrast $\hat{\rho}$, which resemble the barriers between low density and high density regions. In the subsequent evolution the overdense regions eventually collapse under gravity to compact objects, which form early states of the structures observed today. 

\subsection{Global existence and shock formation in the Euler-Poisson system}

In this paper, we  consider the system \eqref{introeq} on an expanding background with \red{scale factor $a(t)=t^{\alpha}$}, $\alpha>0$, which generates specific dissipative terms and \blue{decaying} factors in some of the terms (see \eqref{eq:systemrhou}). In accordance with popular numerical schemes in cosmology (see, e.g., \cite{adamek2016}), we fix the scale factor as a function of time. This is in contrast to a fully relativistic treatment, in which expansion is influenced locally by the matter distribution. While this fixes some of the gravitational degrees of freedom, it is important to note that we make \textbf{no linear approximations} in the dynamics of the fluid equations, which is in contrast to the typical linearisation argument used when deriving the Jeans instability.

We study the evolution of solutions to  \eqref{introeq}  with initial data close to a homogeneous fluid state
\begin{equation}
\rho_{\mathrm{hom}}(t=1)=\rho_{\mathrm{c}}\in\mathbb R_+,\quad \vec v_\mathrm{hom}(t=1)=\vec 0,
\end{equation}
which has the time-evolution
\begin{equation}
\rho_{\mathrm{hom}}(t)=\rho_{\mathrm{c}}\cdot t^{-3\alpha},\quad \vec v_\mathrm{hom}(t)=\vec 0.
\end{equation}

We consider the standard fluid equation of state in cosmology, which is that of a barotropic fluid (cf.~\cite{BT2008})
\begin{equation}
p=K\rho,
\end{equation}
where $K=c_s^2$ is the square of the speed of sound. \blue{To understand the conditions that could source structures, we aim on the one hand to establish parameters $(\alpha, K)$ for which sufficiently small initial data yields solutions which remain small over time and hence avoid shock formation (stable regime). On the other hand, we also aim to determine for which parameters arbitrarily small initial data leads to shock formation in finite time (unstable regime).} The latter can be associated with the regime of structure formation.

We study this question in this paper by means of analytical and numerical approaches for the shock \blue{suppression} and the shock formation, respectively. 

In the main theorem, Theorem \ref{thm1}, we prove that for an expansion rate $\alpha>2/3$, there exists an open neighborhood in a suitable Sobolev space of initial data such that the corresponding solutions remain close to the background solution for all times. The proof is based on energy estimates for expansion-normalized fluid variables, which use a correction method. Standard Sobolev norms are complemented by small indefinite terms, in order for the corrected norm to fulfill suitable decay estimates. These estimates are sufficiently strong to establish global smallness of the appropriately rescaled solutions in the small data regime. 

In Section \ref{sec:numerics}, we present a \red{complementary} numerical study in combination with a subsequent scaling analysis to provide strong evidence for shock formation for arbitrarily small initial data whenever $\alpha\leq 2/3$. Moreover, the scaling analysis reveals that the shock formation from arbitrarily small data gets weaker as the expansion rate grows and asymptotically fails when it approaches $\alpha=2/3$ from below. Note that, for the sake of computational efficiency, our numerical experiments are limited to one spatial dimension and neglect the self-gravitating nature of the fluid. This approach is justified by the observation that, in the analytical proof of Theorem \ref{thm1}, the terms that stem from the gravitational interaction are, in fact, error terms. 

The range of expansion rates that are covered by our analysis include all relevant examples of cosmological models in the decelerated regime. These are radiation-dominated models $\alpha=1/2$, matter dominated models $\alpha=2/3$ and all models with expansion rates between those values. This class of models arise as solutions to the Friedman equations with fluid matter sources \cite{cqgreview}. Moreover, the faster expanding models with $\alpha\in(2/3,1)$ are solutions to the Friedman equation with suitable scalar field sources.

In combination, these results imply the existence of a critical threshold for barotropic fluids at the expansion rate $\alpha=2/3$, which demarcates the regimes of structure formation and \blue{homogenisation} in the cosmological Euler-Poisson system, respectively.
\red{We point out that this critical expansion rate coincides with the matter-dominated Universe, which describes the identical epoch in the cosmological evolution.}

\subsection{Relation to the relativistic regime}
The avoidance of the formation of shocks in a fluid due to sufficiently fast expansion of space is referred to as \emph{fluid stabilization}. It was discovered by Brauer et al.~\cite{brauer1994} for the classical Euler-Poisson system in a spacetime undergoing accelerated expansion ($\alpha>1$), but then extensively studied for the relativistic Euler equations coupled to the Einstein equations (cf.~\cite{cqgreview} for a review). While it has been shown that accelerated and linear expansion always leads to fluid stabilization in subradiative fluids \cite{rodnianski2013,hadzic2015,speck2013,friedrich2017,fajman2024arma,fajman2024imrn},
in the regime of decelerated expansion (which corresponds to $\alpha<1$) relativistic fluids may form shocks from arbitrarily small initial inhomogeneities. Note that in the superradiative regime, $\frac{1}{3}<K<1$, accelerated expansion does not guarantee stability, as seen, e.g., in \cite{oliynyk2024cmp,beyer2023}. A critical phenomenon similar to the one presented in the present paper was discovered for the relativistic Euler equations in \cite{fajman2024arxiv} and \cite{fajman2025arxiv}. A crucial difference between the results lies in the fact that, in the relativistic case, the stability threshold is dependent on the speed of sound $c_S$ of the fluid. In particular, the critical expansion rate in the relativistic case is given by $\alpha_{\mathrm{crit}}=\tfrac2{3(1-K)}$.

 As shown in the present paper, in the non-relativistic case the threshold is universal and given by $\alpha_{\mathrm{crit}}=2/3$. This observation hints at the fact that the nature of structure formation process is connected to the characteristic speed of matter at hand and plays out fundamentally different in the relativistic and non-relativistic regimes.

\subsection*{Acknowledgements}
D.F.~and M.O.~acknowledge support by the Austrian Science Fund (FWF) grant \emph{Matter dominated Cosmology} 10.55776/PAT7614324. Furthermore, M.O.~recognizes that this work was in part funded within the Dimitrov Fellowship Program of the OeAW. The research of M.M.~was supported by the FWF through project 10.55776/P36455 and the START-Project 10.55776/Y963.

	\section{Equations of motion in Newtonian cosmology}
	
	We start with a short overview of the necessary notations and conventions regarding function spaces. 
	
	\subsection{Notation and norms}
	Throughout this article we assume that integration is performed over the manifold $(\mathbb{T}^{3},g_{E}=\delta_{ij}dx^{i}dx^{j})$, i.e., for a function $f:\mathbb{T}^{3}\to \mathbb{R}$, we simply write
	\begin{equation*}
		\int f\coloneqq \int_{\mathbb{T}^{3}}f\coloneqq \int_{\mathbb{T}^{3}}f dV_{g_{E}},
	\end{equation*}
	where $dV_{g_{E}}$ is the unique Riemannian volume form of $(\mathbb{T}^{3},g_{E})$.	The following notations define the relevant norms we use to measure the small data solutions.
	The mean $ \bar{f} $ of a function $ f $ is given by
	\begin{equation*}
		\bar{f}=\frac{\int_{\mathbb{T}^{3}}f}{\int_{\mathbb{T}^{3}}}.
	\end{equation*}
	By $ \|\cdot\|_{H^{s}} $, we denote the standard Sobolev norm of order $ s $, i.e.,
	\begin{equation*}
		\|f\|_{H^{s}}^{2}=\sum_{|k|\leq s}\int_{\mathbb{T}^{3}}|\partial^{k}f|^{2}
	\end{equation*}
	where 
	\begin{equation*}
		|\partial^{k}f|\coloneqq \sqrt{\delta^{i_{1}j_{1}}\cdots\delta^{i_{k}j_{k}}\partial_{i_{1}}\cdots \partial_{i_{k}}f\partial_{j_{1}}\cdots\partial_{j_{k}}f}.
	\end{equation*}
	In general, the $ L^{2} $-inner product of tensors $ T_{1} $ and $ T_{2} $, e.g.,  given by
	\begin{equation*}
		T_{1}=(T_{1})^{ab}{}_{cd}, \qquad T_{2}=(T_{2})_{ab}{}^{c}{}_{d},
	\end{equation*}
	is assumed to be contracted via the Euclidean metric, e.g.,
	\begin{equation}\label{eq:innerproduct}
		(T_{1},T_{2})_{L^{2}}=\int \delta^{j_{a}i_{b}}(T_{1})^{i_{1}i_{2}}{}_{j_{1}j_{a}}(T_{2})_{i_{1}i_{2}}{}^{j_{1}}{}_{i_{b}}.
	\end{equation}
	
	\begin{remark}\label{remark:ambiguity}
		Note that the $L^{2}$-inner product defined in \eqref{eq:innerproduct} is \emph{unambiguous}. The two tensors in the product need to have the same number of ordered indices. Each slot on of the first tensor is contracted with the corresponding slot of the second tensor. Should these indices both be contra-variant or covariant, then a contraction via the Euclidean metric is assumed. 
	\end{remark}
	
	\subsection{Newton-Cartan gravity}
	The canonical approach to formulating a Newtonian cosmological model of a fluid evolving under the effect of gravity is \red{either via a direct change of coordinates based on the length-scaling (e.g., \cite{Baumann})} or via the Newton-Cartan-\blue{Ehlers} model of gravity \cite{brauer1994}. \red{Both lead to the same system, and so we just recall the latter approach.} \red{The Newton-Cartan-\blue{Ehlers} equations} take the form of the following hyperbolic-elliptic system 
	\begin{equation}\label{eq:newtoncartan}
		\begin{cases}
			\partial_{t}\rho+\partial_{i}(\rho u^{i})+\theta\rho=0,\\
			\partial_{t}u^{i}+u^{j}\partial_{j}u^{i}+\rho^{-1}h^{ij}\partial_{j}p+\frac{2}{3}\theta u^{i}-h^{ij}\partial_{j}\phi =0,\\
			h^{ij}\partial_{i}\partial_{j}\phi=-4\pi (\rho-\bar{\rho}),\\
			p(\rho)=K\rho.
		\end{cases}
	\end{equation}
	In \eqref{eq:newtoncartan}, $ \rho $ and $ u $ are the density and velocity of the fluid, respectively. The function $ \theta $ depends only on time and plays the roll of the Hubble-parameter. The spatial domain of the problem is the Riemannian manifold $ (\mathbb{T}^{3},h) $, where $ h $ is uniquely determined by the system
	\begin{equation}\label{eq:evolutionh}
		\begin{cases}
			\partial_{t}h_{ij}=\frac{2\theta}{3}h_{ij},\\
			h(t_{0})_{ij}=\delta_{ij}.
		\end{cases}
	\end{equation}
	In addition to the two hyperbolic and the elliptic equations in \eqref{eq:newtoncartan}, a linear equation of state is chosen to close the system. For additional discussion and motivation of \eqref{eq:newtoncartan}, we refer to \cite{brauer1994,brauer1992,gong2024}. 
\subsection{Decoupling the expansion rate}	
The aim of this paper is to find the precise relation between global regularity of the fluid and the expansion rate of space. To obtain the freedom to prescribe the expansion rate we decouple \eqref{eq:newtoncartan} from \eqref{eq:evolutionh} and fix the spatial metric as follows:

\begin{equation}
\begin{aligned}
h(t)&=a^2(t)\delta,\\
\theta&=3\frac{\dot a }{a},\\
a(t)&=t^\alpha.
\end{aligned}
\end{equation}	
	
This leads to the decoupled system
	\begin{equation}\label{eq:systemrhou}
		\begin{cases}
			\partial_{t}U+A^{k}(t,U)\partial_{k}U=F(t,U,\phi),\\
			a^{-2}\delta^{ij}\partial_{i}\partial_{j}\phi=-4\pi (\rho-\bar{\rho}),\\
			p(\rho)=K\rho,
		\end{cases}
	\end{equation}
	where $ U=(\rho,u) $ is the solution vector and 
	\begin{equation}
		\begin{aligned}
		&A^{k}(t,U)=\begin{pmatrix}
			u^{k} & \rho \delta^{k}_{j}\\
			\frac{K}{a^{2}\rho}\delta^{ki} & u^{k}\delta^{i}_{j}
		\end{pmatrix},
		&F(t,U,\phi)=\begin{pmatrix}
			-3\frac{\dot{a}}{a}\rho \\
			-2\frac{\dot{a}}{a}u^{i}+a^{-2}\delta^{ij}\partial_{j}\phi
		\end{pmatrix}.
		\end{aligned}
	\end{equation}

The system in \eqref{eq:systemrhou} has the following homogeneous solutions, with $a(1)=1$ and $\phi=0$:
\begin{equation}
\begin{aligned}
\rho_{\mathrm{hom}}(t)&=\rho_{\mathrm{hom}}(1)a(t)^{-3},\\
u^i_{\mathrm{hom}}(t)&=u^i_{\mathrm{hom}}(1)a^{-2}(t).
\end{aligned}
\end{equation}
	Introducing expansion-normalized variables $ (L(t,U),v(t,U)) $ via
	\begin{equation}\label{eq:Lv}
		\begin{aligned}
			&L(t,U)=\log(a(t)^{3}\rho), & v^{i}(t,U)=a(t)u^{i},
		\end{aligned}
	\end{equation}
	we find that
	\begin{equation}\label{eq:systemLv}
		\begin{cases}
			\partial_{t}V+B^{k}(t,V)\partial_{k}V=G(t,V,\phi),\\
			a^{-2}\delta^{ij}\partial_{i}\partial_{j}\phi=-4\pi (\rho-\bar{\rho}),\\
			p(\rho)=K\rho,
		\end{cases}
	\end{equation}
	where $ V=(L,v) $ is the solution vector and 
	\begin{equation}
		\begin{aligned}
			&B^{k}(t,V)=a^{-1}\begin{pmatrix}
				v^{k} &  \delta^{k}_{j}\\
				K\delta^{ki} & v^{k}\delta^{i}_{j}
			\end{pmatrix},
			&G(t,V,\phi)=\begin{pmatrix}
				0 \\
				-\frac{\dot{a}}{a}v^{i}+a^{-1}\delta^{ij}\partial_{j}\phi
			\end{pmatrix}.
		\end{aligned}
	\end{equation}
This system has in particular the homogeneous solution $L(t)=L(1)$, $v^i=0$ and $\phi=0$, whose perturbations we study further below.

	\begin{remark}
		The system given in \eqref{eq:systemLv} consists of a quasilinear symmetric-hyperbolic and a linear elliptic part. For sufficiently regular initial data, one can prove a local well-posedness result via standard iteration arguments and elliptic estimates. For details, we refer to \cite{brauer1992}. 
	\end{remark}

\section{Stability of homogeneous solutions for $\alpha>2/3$}\label{sec:stability}
This section contains the proof of global existence for small data as formulated in Theorem \ref{thm1}.

\subsection{Setup and preliminary estimates}	
	We study the hyperbolic-elliptic system of PDEs given by
	\begin{equation}\label{eq:equationsofmotion}
		\begin{cases}
			\partial_{t}L=-t^{-\alpha}v^{i}\partial_{i}L-t^{-\alpha}\partial_{i}v^{i},\\
			\partial_{t}v^{i}=-t^{-\alpha}K\delta^{ij}\partial_{j}L-t^{-\alpha}v^{j}\partial_{j}v^{i}-\alpha t^{-1}v^{i}+t^{-\alpha}\delta^{ij}\partial_{j}\phi,\\
			t^{-2\alpha}\laplacian \phi = -(\rho-\bar{\rho}).
		\end{cases}
	\end{equation}
	In \eqref{eq:equationsofmotion}, $ K\in (0,\frac{1}{3}) $, $ \alpha\in (0,\infty) $, $ (t,x)\in ([t_{0},t_{1})\times \mathbb{T}^{3}) $ and $ \rho \coloneqq t^{-3\alpha}\exp(L) $. For the entirety of this article, $ s $ will denote an integer with $ s\geq 3 $. We are studying solutions to the system \eqref{eq:equationsofmotion} that have the regularity properties
	\begin{align*}
		L,v^{i} \in C^{0}([0,T];H^{s}(\mathbb{T}^{3}))\cap C^{1}([0,T];H^{s-1}(\mathbb{T}^{3})),\\
		\phi \in C^{0}([0,T];N^{s+1}(\mathbb{T}^{3}))\cap C^{1}([0,T];N^{s}(\mathbb{T}^{3})),
	\end{align*}
	where, for $ k\in \mathbb{N} $,
	\begin{equation}
		N^{k}(\mathbb{T}^{3})\coloneqq \left\{f\in H^{k}(\mathbb{T}^{3})\biggr\vert \int_{\mathbb{T}^{3}}f=0\right\}. 
	\end{equation}
	By standard elliptic theory, we find that for $ \ell\geq 1 $
	\begin{equation}\label{eq:estimatephiH2}
		\|\partial \phi\|_{H^{\ell}}\leq Ct^{2\alpha}\|\rho-\bar{\rho}\|_{H^{\ell-1}}.
	\end{equation}

	From the equations of motion \eqref{eq:equationsofmotion}, we immediately derive
	estimates for the mean $ \bar{v} $, 
	\begin{equation}\label{eq:estimatemeanv}
		\frac{d}{dt}\bar{v}^{i}=-\alpha t^{-1}\bar{v}^{i}-t^{-\alpha}(v,\partial v^{i})_{L^{2}}\leq -\alpha t^{-1}\bar{v}^{i}+t^{-\alpha}\|v\|_{L^{2}}\|v\|_{H^{1}}.
	\end{equation}
	Similarly, we get an estimate on the mean value of $ L $,
	\begin{equation}\label{eq:estimatemeanL}
		\frac{d}{dt}\bar{L}=-t^{-\alpha}\int v^{a}\partial_{a}L\lesssim t^{-\alpha}\|L\|_{\dot{H}^{1}}\|v\|_{L^{2}}.
	\end{equation}
	This, in turn, allows for pointwise control of $ L $, as via Sobolev embedding and Poincar\'e inequality,
	\begin{equation}\label{eq:estimateLpointwise}
		\|L\|_{L^{\infty}}\lesssim \|L\|_{H^{2}}\lesssim \bar{L}+\|L\|_{\dot{H}^{1}}+\|L\|_{\dot{H}^{2}}.
	\end{equation}
		
	\subsubsection{Definition of the energy functionals}
	
	Before deriving a priori-estimates for Sobolev-type energies of the fluid variables $ (L,v) $, we will define energy functionals that are coercive and adapted to the equations in \eqref{eq:equationsofmotion}. 
	
	\begin{definition}
		We define the first-order energy functional $ E_{1} $ via
		\begin{equation}\label{eq:energyfirstorder}
			E_{1}[v,L]\coloneqq |\bar{v}|^{2}+ \|v\|_{\dot{H}^{1}}^{2}+K\|L\|_{\dot{H}^{1}}^{2}+t^{-1+\alpha}c(v,\partial L)_{L^{2}},
		\end{equation}
		where $ c>0 $ is some constant to be determined at a later point. 
		Now let $ \ell>1 $ be an integer. We define the following homogeneous energy of order $ \ell $
		\begin{equation}\label{eq:higherorderenergy}
			E_{\ell}[v,L]\coloneqq \|v\|_{\dot{H}^{\ell}}^{2}+K\|L\|_{\dot{H}^{\ell}}^{2}+t^{-1+\alpha}c(\partial^{\ell-1}v,\partial^{\ell}L)_{L^{2}}.
		\end{equation}
		In addition, we define the total energy of order $ s $, $ \mathcal{E}_{s} $, via
		\begin{equation*}
			\mathcal{E}_{s}[v,L]\coloneqq \sum_{k=1}^{s} E_{k}[v,L].
		\end{equation*}
	\end{definition}
	
	\begin{lemma}[Coercivity]\label{lemma:coercivity}
		For all $ s\geq 3 $
		\begin{equation*}
			\mathcal{E}_{s}[v,L]^{\frac{1}{2}}\simeq \|v\|_{H^{s}}+\|\partial L\|_{H^{s-1}}.
		\end{equation*}
	\end{lemma}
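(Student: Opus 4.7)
The plan is to prove $\mathcal{E}_s \simeq \|v\|_{H^s}^2 + \|\partial L\|_{H^{s-1}}^2$ by splitting $\mathcal{E}_s$ into a manifestly positive homogeneous part and an indefinite correction, and then matching the two sides term by term. Expanding the definition yields $\mathcal{E}_s = |\bar v|^2 + \sum_{k=1}^s \|v\|_{\dot H^k}^2 + K\sum_{k=1}^s\|L\|_{\dot H^k}^2 + c t^{-1+\alpha}\sum_{k=1}^s(\partial^{k-1}v, \partial^k L)_{L^2}$, while the target norm squared decomposes as $\|v\|_{L^2}^2 + \sum_{k=1}^s \|v\|_{\dot H^k}^2 + \sum_{k=1}^s \|L\|_{\dot H^k}^2$. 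I would reconcile the two by using the Poincar\'e inequality $\|v - \bar v\|_{L^2} \lesssim \|v\|_{\dot H^1}$ together with the trivial bound $|\bar v|^2 \lesssim \|v\|_{L^2}^2$, which yields $\|v\|_{L^2}^2 + \|v\|_{\dot H^1}^2 \simeq |\bar v|^2 + \|v\|_{\dot H^1}^2$; since $K$ is a fixed constant in $(0,1/3)$, the positive parts of the two sides are then literally equivalent.

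The upper bound is immediate: Cauchy--Schwarz estimates each cross term by $ct^{-1+\alpha}\|v\|_{\dot H^{k-1}}\|L\|_{\dot H^k}$, which is controlled by the target norm squared with a multiplicative constant depending on $c$ and $t$. For the lower bound I would apply Young's inequality to split each cross term as $\tfrac{\eta}{2}\|v\|_{\dot H^{k-1}}^2 + \tfrac{c^2 t^{-2+2\alpha}}{2\eta}\|L\|_{\dot H^k}^2$, choose $\eta$ small enough that the $v$-contribution is absorbed into $\sum_{k=1}^s \|v\|_{\dot H^{k-1}}^2$ (invoking Poincar\'e on the $k=1$ term, where $\|v\|_{\dot H^0} = \|v\|_{L^2}$ appears), and finally choose the correction constant $c$ small enough relative to $K$, $\eta$, and the time factor so that the $L$-contribution is absorbed into $K\|L\|_{\dot H^k}^2$.

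The only real obstacle is the universality of the constant $c$: it is fixed once and for all in the definition of $E_1$ and $E_\ell$, yet the absorption in the lower bound requires $c^2 t^{-2+2\alpha}$ to be strictly dominated by $K$ uniformly on $[t_0, t_1)$. For the range of expansion rates relevant to the main theorem ($\alpha > 2/3$ with initial time $t_0 > 0$), the factor $t^{-1+\alpha}$ is controlled on the interval of existence, so picking $c$ depending only on $K$, $t_0$, $\alpha$ and the Poincar\'e constant on $\mathbb T^3$ yields a uniform equivalence.
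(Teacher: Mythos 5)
Your argument is correct and follows essentially the same route as the paper's proof: Cauchy--Schwarz to relate $|\bar v|$ and $\|v\|_{L^2}$, the Poincar\'e inequality to recover $\|v\|_{L^2}^2$ from $|\bar v|^2+\|\partial v\|_{L^2}^2$, and a (weighted) Young inequality to absorb the indefinite cross terms, using that $t^{-1+\alpha}$ is bounded on $[t_0,\infty)$ for $\alpha<1$. You are in fact more explicit than the paper about the one genuine constraint here --- that the absorption into $K\|L\|_{\dot H^k}^2$ forces the correction constant $c$ (equivalently $c\,t_0^{-1+\alpha}$) to be small relative to $\sqrt{K}$ --- which the paper leaves implicit in the phrase that the coefficient ``can always be absorbed.''
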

	\begin{proof}
		First, we take a careful look at the $ L^{2} $-norm. Note that, using the Cauchy-Schwarz inequality, we have that
		\begin{equation*}
			|\bar{v}|=\frac{1}{\int_{\mathbb{T}^{3}}}\left|\int v \right| \leq \frac{1}{\left(\int_{\mathbb{T}^{3}}\right)^{\frac{1}{2}}}\|v\|_{L^{2}}.
		\end{equation*}
		Using this, and estimating the mixed term in \eqref{eq:higherorderenergy} by the Cauchy-Schwarz and Young inequalities, it is straightforward to see that
		\begin{equation*}
			\mathcal{E}_{s}[v,L]\lesssim  \|v\|_{H^{s}}^{2}+\|L\|_{H^{s}}^{2}.
		\end{equation*}
		Regarding the coercivity over the Sobolev-norm, first consider that, by the Poincar\'e inequality, we have that
		\begin{equation*}
			\|v\|_{L^{2}}^{2}=\int |v-\bar{v}+\bar{v}|^{2}\lesssim |\bar{v}|^{2}+\|\partial v\|_{L^{2}}^{2}.
		\end{equation*} 
		When estimating $ \|v\|_{H^{s}}^{2}+\|\partial L\|_{H^{s-1}}^{2} $, the time-dependent coefficient $ t^{-1+\alpha} $ in front of the mixed term in \eqref{eq:higherorderenergy} assures that it can always be absorbed into the norm. With these considerations in mind, it is easy to see that
		\begin{equation*}
			  \|v\|_{H^{s}}^{2}+\|\partial L\|_{H^{s-1}}^{2} \lesssim \mathcal{E}_{s}[v,L].
		\end{equation*}
	\end{proof}	
		
	Note that $ E_{1} $ includes the mean velocity $ \bar{v} $. The specific structure of the norm $ \mathcal{E}_{s} $ is adapted to the problem at hand and will be motivated by the following discussion. 	
	
	\subsection{Lower-order estimates}
	We now present the correction mechanism, which provides the essential energy estimate for the proof, at the level of $H^1$-regularity. This is the key idea relevant for the proof of Theorem \ref{thm1}. Further below, we present estimates in higher regularity, which follow the same structure. 	
	
	\subsubsection{Evolution of first derivatives}
	
	Commuting the evolution equations for $ (L,v)^{T} $ in \eqref{eq:equationsofmotion} with the spatial derivative operator $ \partial_{j} $ gives 
	\begin{equation}\label{eq:eomfirstderivatives}
		\begin{aligned}
			\partial_{t}\partial_{j}L&=-t^{-\alpha}v^{a}\partial_{j}\partial_{a}L-t^{-\alpha}\partial_{j}\partial_{a}v^{a}-t^{\alpha}\partial_{a}L\partial_{j}v^{a},\\
			\partial_{t}\partial_{j}v^{i}&=-t^{-\alpha}K\delta^{ia}\partial_{j}\partial_{a}L-t^{-\alpha}v^{a}\partial_{j}\partial_{a}v^{i}-\alpha t^{-1}\partial_{j}v^{i}+t^{-\alpha}\delta^{ia}\partial_{j}\partial_{a}\phi-t^{-\alpha}\partial_{a}v^{i}\partial_{j}v^{a}.
		\end{aligned}
	\end{equation}
	The $ \dot{H}^{1} $-energy of $ v $ is given by
	\begin{equation*}
		\|v\|_{\dot{H}^{1}}^{2}=\int |\partial v|^{2}.
	\end{equation*}
	A simple calculation gives
	\begin{equation}\label{eq:dervfirstorder}
		\begin{aligned}
			\partial_{t}\|v\|_{\dot{H}^{1}}^{2}&=2\int \delta_{kl}\delta^{ij}\partial_{i}v^{k}\left(-t^{-\alpha}K\delta^{la}\partial_{j}\partial_{a}L-t^{-\alpha}v^{a}\partial_{j}\partial_{a}v^{l}-\alpha t^{-1}\partial_{j}v^{l}+t^{-\alpha}\delta^{la}\partial_{j}\partial_{a}\phi-t^{-\alpha}\partial_{a}v^{l}\partial_{j}v^{a}\right)\\
			&=-2\alpha t^{-1}\|v\|_{\dot{H}^{1}}^{2}-2Kt^{-\alpha}(\partial v,\partial^{2}L )_{L^{2}}-2t^{-\alpha}(v\partial v,\partial^{2}v)_{L^{2}}\\
			&\quad -2t^{-\alpha}\int \delta_{kl}\delta^{ij}\partial_{i}v^{k}\partial_{j}v^{a}\partial_{a}v^{l}+2t^{-\alpha}(\partial v,\partial^{2}\phi)_{L^{2}}.
		\end{aligned}
	\end{equation}
	A similar calculation for $ L $ yields 
	\begin{equation}\label{eq:derLfirstorder}
		\begin{aligned}
			\partial_{t}\|L\|_{\dot{H}^{1}}^{2}&=2\int \delta^{ij}\partial_{i}L\left(-t^{-\alpha}v^{a}\partial_{j}\partial_{a}L-t^{-\alpha}\partial_{j}\partial_{a}v^{a}-t^{\alpha}\partial_{a}L\partial_{j}v^{a}\right)\\
			&=-2t^{-\alpha}(v\partial L,\partial^{2}L)_{L^{2}}-2t^{-\alpha}\int \delta^{ij}\partial_{i}L\partial_{j}\partial_{a}v^{a}-2t^{-\alpha}(\partial L\partial L,\partial v)_{L^{2}}.
		\end{aligned}
	\end{equation}
	In addition, we investigate the evolution of the mixed term, i.e.,
	\begin{equation}\label{eq:derLmixirstorder}
		\begin{aligned}
			\partial_{t}(v,\partial L)_{L^{2}}&=\int \partial_{i}L\left(-t^{-\alpha}K\delta^{ia}\partial_{a}L-t^{-\alpha}v^{a}\partial_{a}v^{i}-\alpha t^{-1}v^{i}+t^{-\alpha}\delta^{ia}\partial_{a}\phi\right)\\
			&\quad +\int v^{j}\left(-t^{-\alpha}v^{a}\partial_{j}\partial_{a}L-t^{-\alpha}\partial_{j}\partial_{a}v^{a}-t^{\alpha}\partial_{a}L\partial_{j}v^{a}\right)\\
			&=-Kt^{-\alpha}\|L\|_{\dot{H}^{1}}^{2}-\alpha t^{-1}(v,\partial L)_{L^{2}}-t^{-\alpha}(v\partial L,\partial v)_{L^{2}}+t^{-\alpha}(\partial L,\partial \phi)_{L^{2}}\\
			&\quad-t^{-\alpha}(v^{2},\partial^{2}L)_{L^{2}}-t^{-\alpha}\int v^{j}\partial_{j}\partial_{a}v^{a}-t^{-\alpha}(v\partial L,\partial v)_{L^{2}}.
		\end{aligned}
	\end{equation}

	\subsubsection{Estimates on the first-order energy}
	
	Due to the special composition of $ E_{1} $ and to illustrate the fundamental idea of the proof we now present a detailed account of the decay mechanism for the first-order energy.

	\begin{lemma}[First-order energy estimate]\label{lemma:firstorder}
		There exists a monotonously increasing function $ C $ with $ C(1)=1 $ and a constant $ C^{\prime}>0 $, such that the first-order energy defined in \eqref{eq:energyfirstorder} with $ c=\alpha $ enjoys the estimate
		\begin{equation}\label{eq:energyinequalityfirstorder}
			\frac{d}{dt}E_{1}[v,L](t)\leq -\alpha t^{-1} E_{1}[v,L]+C^{\prime}\left(t^{-\alpha}\|\partial v\|_{L^{\infty}}E_{1}[v,L]+\left(1+C(\|L\|_{L^{\infty}})\right)(t^{-1-\alpha}+t^{-3\alpha})E_{1}[v,L]\right).
		\end{equation}
	\end{lemma}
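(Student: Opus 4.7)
The plan is to differentiate $E_1[v,L]$ in time and group the resulting terms into three classes: (i) principal quadratic $v$--$L$ cross terms that must cancel thanks to the symmetric-hyperbolic structure of the Euler block, (ii) decay-producing terms that reproduce the target rate $-\alpha t^{-1}E_1$ on each summand of $E_1$, and (iii) cubic and gravitational remainders to be absorbed into the right-hand side of \eqref{eq:energyinequalityfirstorder}. The computation starts from four ingredients: the mean equation (refining \eqref{eq:estimatemeanv} so that the nonlinear piece carries a $\|\partial v\|_{L^\infty}$ factor via H\"older and $\overline{|v|}\leq|\mathbb T^3|^{-1/2}\|v\|_{L^2}$), the identities \eqref{eq:dervfirstorder} and \eqref{eq:derLfirstorder}, and the Leibniz expansion of $\tfrac{d}{dt}\bigl[c\,t^{-1+\alpha}(v,\partial L)_{L^2}\bigr]$ using \eqref{eq:derLmixirstorder}.

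The decisive point concerns (ii). Of the four summands of $E_1$, only $|\bar v|^2$ and $\|v\|_{\dot H^1}^2$ possess an intrinsic dissipative term, in both cases of the form $-2\alpha t^{-1}(\cdot)$, whereas $K\|L\|_{\dot H^1}^2$ has none. The correction is engineered to supply it: differentiating $c\,t^{-1+\alpha}(v,\partial L)_{L^2}$ and using $\partial_t(v,\partial L)_{L^2}=-Kt^{-\alpha}\|L\|_{\dot H^1}^2-\alpha t^{-1}(v,\partial L)_{L^2}+\ldots$ from \eqref{eq:derLmixirstorder}, the leading contribution is
\begin{equation*}
c\,t^{-1+\alpha}\bigl(-Kt^{-\alpha}\|L\|_{\dot H^1}^2\bigr)=-cKt^{-1}\|L\|_{\dot H^1}^2,
\end{equation*}
which realises the rate $-\alpha t^{-1}$ on $K\|L\|_{\dot H^1}^2$ exactly when $c=\alpha$. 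The remaining pieces from differentiating the correction assemble into $-\alpha t^{-2+\alpha}(v,\partial L)_{L^2}$, and the discrepancy with the ``matched'' value $-\alpha^2 t^{-2+\alpha}(v,\partial L)_{L^2}$, of size $\alpha(1-\alpha)t^{-2+\alpha}|(v,\partial L)_{L^2}|$, is controlled by Young and Poincar\'e using the surplus $-\alpha t^{-1}(|\bar v|^2+\|v\|_{\dot H^1}^2)$ that is left over after matching the rate on those two summands, exploiting that $t^{-2+\alpha}\leq t^{-1}$ for $t\geq 1$ and $\alpha<1$.

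Group (i) is then the cancellation of $-2Kt^{-\alpha}(\partial v,\partial^2 L)_{L^2}$ in \eqref{eq:dervfirstorder} against $-2Kt^{-\alpha}\int\delta^{ij}\partial_i L\,\partial_j\partial_a v^a$ entering $K\cdot$\eqref{eq:derLfirstorder} via one integration by parts on $\mathbb T^3$. Group (iii) contains the cubic integrands in \eqref{eq:dervfirstorder}--\eqref{eq:derLmixirstorder} involving $\partial^2 v$ or $\partial^2 L$, namely $(v\partial v,\partial^2 v)_{L^2}$, $(v\partial L,\partial^2 L)_{L^2}$ and $(v^2,\partial^2 L)_{L^2}$; each becomes, after a further integration by parts on the torus, a combination of $\partial v$ and $\partial L$ in $L^2$ paired with $\partial v$ in $L^\infty$, so that only $\|\partial v\|_{L^\infty}$ (not $\|\partial L\|_{L^\infty}$) appears in the final bound, matching the form of \eqref{eq:energyinequalityfirstorder}. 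The subtle term is $-t^{-\alpha}\int v^j\partial_j\partial_a v^a$ in \eqref{eq:derLmixirstorder}, which is genuinely bilinear in $\partial v$; after one integration by parts it reads $+t^{-\alpha}\int(\partial_j v^j)(\partial_a v^a)\leq t^{-\alpha}\|v\|_{\dot H^1}^2$, and multiplication by the correction prefactor $\alpha t^{-1+\alpha}$ gives $\alpha t^{-1}\|v\|_{\dot H^1}^2$, which is absorbed into the surplus $-2\alpha t^{-1}\|v\|_{\dot H^1}^2$ leaving exactly the target rate $-\alpha t^{-1}\|v\|_{\dot H^1}^2$.

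Lastly, the two Poisson contributions $2t^{-\alpha}(\partial v,\partial^2\phi)_{L^2}$ in \eqref{eq:dervfirstorder} and $\alpha t^{-1+\alpha}\cdot t^{-\alpha}(\partial L,\partial\phi)_{L^2}$ from the correction are handled via \eqref{eq:estimatephiH2} combined with the pointwise bound \eqref{eq:estimateLpointwise} and the Poincar\'e inequality applied to $e^L-\overline{e^L}$ to obtain $\|\rho-\bar\rho\|_{L^2}\lesssim t^{-3\alpha}C(\|L\|_{L^\infty})\|L\|_{\dot H^1}$, which produces the $(1+C(\|L\|_{L^\infty}))(t^{-1-\alpha}+t^{-3\alpha})E_1$ term in the statement. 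I expect the main obstacle to be conceptual rather than computational: identifying the precise form of the correction, with the prefactor $t^{-1+\alpha}$ dictated by the requirement $t^{-1+\alpha}\cdot t^{-\alpha}=t^{-1}$ at the dissipation scale and the constant $c=\alpha$ dictated by matching coefficients in the $\|L\|_{\dot H^1}^2$ slot, is what turns the otherwise indefinite cross-product $(v,\partial L)_{L^2}$ into a working correction; the remainder of the argument is careful bookkeeping.
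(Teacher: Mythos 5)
Your proposal follows essentially the same route as the paper's proof: the same corrected energy with $c=\alpha$ chosen so that $c\,t^{-1+\alpha}\cdot\bigl(-Kt^{-\alpha}\|L\|_{\dot H^1}^{2}\bigr)$ supplies the missing damping on the $L$-slot, the same integration-by-parts cancellation of the principal cross terms (the paper's $I_{1}=0$), the same absorption of $-\alpha t^{-1}\int v^{j}\partial_{j}\partial_{a}v^{a}$ into the surplus of $-2\alpha t^{-1}\|v\|_{\dot H^1}^{2}$, and the same treatment of the cubic and Poisson terms via integration by parts, the elliptic estimate \eqref{eq:estimatephiH2} and the Moser/Poincar\'e bound on $\|\rho-\bar{\rho}\|_{L^{2}}$. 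The one place you are more explicit than the paper is the leftover $-\alpha(1-\alpha)t^{-2+\alpha}(v,\partial L)_{L^{2}}$; be aware that the $\|\partial L\|_{L^{2}}^{2}$ half of your Young splitting has no surplus to land in (the $L$-slot is exactly matched), so it should really be recorded as an extra error term with the integrable coefficient $t^{-2+\alpha}$ rather than absorbed into $-\alpha t^{-1}(|\bar v|^{2}+\|v\|_{\dot H^1}^{2})$ --- a shared imprecision with the paper that is harmless for Theorem \ref{thm1}.
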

	\begin{proof}
		Combining the results from \eqref{eq:estimatemeanv}, \eqref{eq:dervfirstorder}, \eqref{eq:derLfirstorder}, \eqref{eq:derLmixirstorder}, we find that
		\begin{equation}\label{eq:firstorderidentity}
			\begin{aligned}
			\frac{d}{dt}E_{1}[v,L]&= -\alpha t^{-1}\left(2|\bar{v}|^{2}+2\|v\|_{\dot{H}^{1}}^{2}+c\frac{K}{\alpha}\|L\|_{\dot{H}^{1}}^{2}+\frac{c}{\alpha}t^{-1+\alpha}(v,\partial L)_{L^{2}}\right) \\
			&\quad +2t^{-\alpha}(\partial v,\partial^{2}\phi)_{L^{2}}+ct^{-1}(\partial L,\partial \phi)_{L^{2}}-ct^{-1}\int v^{j}\partial_{j}\partial_{a}v^{a}\\
			&\quad +I_{1}+I_{2}+I_{3},
			\end{aligned}
		\end{equation}
		where 
		\begin{align*}
			I_{1}&=-2Kt^{-\alpha}(\partial v,\partial^{2}L )_{L^{2}}-2Kt^{-\alpha}\int \delta^{ij}\partial_{i}L\partial_{j}\partial_{a}v^{a},\\
			I_{2}&=-2t^{-\alpha}(v\partial v,\partial^{2}v)_{L^{2}}-2t^{-\alpha}(v\partial L,\partial^{2}L)_{L^{2}}-ct^{-1}(v^{2},\partial^{2}L)_{L^{2}},\\
			I_{3}&=-2t^{-\alpha}\delta_{ij}\bar{v}^{i}\int v^{k}\partial_{k}v^{j}-2t^{-\alpha}\int \delta_{kl}\delta^{ij}\partial_{i}v^{k}\partial_{j}v^{a}\partial_{a}v^{l}-2Kt^{-\alpha}(\partial L\partial L,\partial v)_{L^{2}}\\
			&\quad -2ct^{-1}(v\partial L,\partial v)_{L^{2}}.
		\end{align*}
		From \eqref{eq:firstorderidentity} we see that the evolution of $ E_{1} $ is governed by some damping terms as well as a variety of other terms. We will now proceed to show that the latter can be treated as a small error. 
		
		First, we need to make sure that no regularity is lost. Hence, we closely inspect terms that include $ \partial^{2}L $ or $ \partial^{2}v $ which are, with the exception of one explicit term in \eqref{eq:firstorderidentity}, contained in $ I_{1} $ and $ I_{2} $. An application of integration by parts shows that, in fact, $ I_{1}=0 $. 
		
		Inspecting $ I_{2} $, we see that, after integrating by parts, the last term does not pose a threat with regards to regularity. For the first term, we note that
		\begin{equation*}
			(v\partial v,\partial^{2}v)_{L^{2}}=-(v\partial v,\partial^{2}v)_{L^{2}}-\int \partial_{i}v^{i}\delta^{ab}\delta_{kl}\partial_{a}v^{k}\partial_{b}v^{l},
		\end{equation*}
		again, simply integrating by parts. Remember that the contraction in this inner product is unambiguous, see Remark \ref{remark:ambiguity}. The same argument works for $ (v\partial L,\partial^{2}L)_{L^{2}} $. Using crude estimates, we conclude that
		\begin{equation*}
			|I_{2}|\lesssim t^{-\alpha}\|\partial v\|_{L^{\infty}}E_{1}[v,L]. 
		\end{equation*}
		Inspecting the terms in $ I_{3} $, we see that straightforward applications of Sobolev and Hölder inequalities as well as elementary estimates lead to a similar conclusion, i.e., 
		\begin{equation*}
				|I_{3}|\lesssim t^{-\alpha}\|\partial v\|_{L^{\infty}}E_{1}[v,L]. 
		\end{equation*}
		
		Lastly, we turn to the terms in \eqref{eq:firstorderidentity} involving the potential $ \phi $. Using the estimate in \eqref{eq:estimatephiH2}, we find 
		\begin{equation*}
			t^{-\alpha}(\partial v,\partial^{2}\phi)_{L^{2}}\leq t^{-\alpha}\|\partial v\|_{L^{2}}\|\partial^{2} \phi\|_{L^{2}}\lesssim t^{\alpha}\|\partial v\|_{L^{2}}\|\rho-\bar{\rho}\|_{L^{2}}.
		\end{equation*}
		Furthermore, by definition of $L$ and using the Poincar\'e inequality, we find
		\begin{equation*}
			\|\rho-\bar{\rho}\|_{L^{2}}\lesssim t^{-3\alpha}\|\partial \exp(L)\|_{L^{2}}\lesssim t^{-3\alpha}(1+C(\|L\|_{L^{\infty}}))\|L\|_{\dot{H}^{1}}.
		\end{equation*}
		Similarly, we estimate
		\begin{equation*}
			t^{-1}(\partial L,\partial \phi)_{L^{2}}\lesssim t^{-1-\alpha} \|L\|_{\dot{H}^{1}}^{2}(1+C(\|L\|_{L^{\infty}})).
		\end{equation*}
		Finally, using integration by parts, we find that
		\begin{equation*}
			-\alpha t^{-1}\int v^{j}\partial_{j}\partial_{a}v^{a}=\alpha \|v\|_{\dot{H}^{1}}^{2},
		\end{equation*}
		which exactly takes away a factor $ \alpha $ from our decay inducing term in \eqref{eq:firstorderidentity}, yielding the desired result. 
	\end{proof}

	\begin{remark}
		Of course, a priori, the estimate in \eqref{eq:energyinequalityfirstorder} does not close, as we do not have control over the factors $ \|\partial v\|_{L^{\infty}} $ and $ \|\partial L\|_{L^{\infty}} $. However, said factors appear at first order, even in the evolution for the higher-order energies. Hence, under suitable bootstrap assumptions, we can use Sobolev embedding to estimate these terms and close the estimates. 
	\end{remark}
	
	\subsection{Higher-order estimates}
	
	We start by commuting the evolution equations of the system \eqref{eq:equationsofmotion} with a derivative operator $$ \partial^{I}=\partial_{1}^{i_{1}}\cdots \partial_{n}^{i_{n}}, $$ where $ I=(i_{1},\dots, i_{n}) $ is a multi-index. 
	\begin{equation}\label{eq:eomhigherorder}
		\begin{aligned}
			\partial_{t}\partial^{I}L&=-t^{-\alpha}v^{a}\partial_{a} \partial^{I}L-t^{-\alpha}\partial_{a}\partial^{I}v^{a}+t^{-\alpha}\mathfrak{F}^{I}_{L},\\
			\partial_{t}\partial^{I}v^{i}&=-Kt^{-\alpha}\delta^{ia}\partial_{a}\partial^{I}L-t^{-\alpha}v^{a}\partial_{a}\partial^{I}v^{i}-\alpha t^{-1}\partial^{I}v^{i}+t^{-\alpha}\delta^{ia}\partial^{I}\partial_{a}\phi+t^{-\alpha}\mathfrak{F}^{i,I}_{v}
		\end{aligned}
	\end{equation}
	where 
	\begin{equation}
		\begin{aligned}
			\mathfrak{F}^{I}_{L}&=-\left[\partial^{I},v^{a}\partial_{a}\right]L,\\
			\mathfrak{F}_{v}^{i,I}&=-\left[\partial^{I},v^{a}\partial_{a}\right]v^{i}.
		\end{aligned}
	\end{equation}

	\subsubsection{Evolution of crucial higher-order terms}
	
	Recall that we set $ s $ an integer, satisfying $ s\geq 3 $. We start with the evolution of the $ \dot{H}^{s} $-energy of the velocity. Using \eqref{eq:eomhigherorder}, we find
	\begin{align*}
		\frac{d}{dt}\|v\|_{\dot{H}^{s}}^{2}&=2\int \delta^{i_{1}j_{1}}\cdots\delta^{i_{s}j_{s}} \delta_{ij}\partial_{i_{1}}\cdots \partial_{i_{s}}v^{j}\partial_{t}\partial_{j_{1}}\cdots \partial_{j_{s}}v^{i}\\
		&=-2\alpha t^{-1}\|v\|_{\dot{H}^{s}}^{2}-2Kt^{-\alpha}(\partial^{s}v,\partial^{s+1}L)_{L^{2}}-2t^{-\alpha}(v\partial^{s}v,\partial^{s+1}v)_{L^{2}}\\
		&\quad +2t^{-\alpha}(\partial^{s}v,\partial^{s+1}\phi)+2t^{-\alpha}\int  \delta_{ij}\partial_{i_{1}}\cdots \partial_{i_{s}}v^{j}\mathfrak{F}^{i,I(i_{1},\dots, i_{s})}_{v},
	\end{align*}
	where $ I(i_{1},\dots,i_{s}) $ is the associated multi-index. Similarly, for $ L $, we find that
	\begin{align*}
		\frac{d}{dt}\|L\|_{\dot{H}^{s}}^{2}&=2\int \delta^{i_{1}j_{1}}\cdots\delta^{i_{s}j_{s}} \partial_{i_{1}}\cdots \partial_{i_{s}}L\partial_{t}\partial_{j_{1}}\cdots \partial_{j_{s}}L\\
		&=-2t^{-\alpha}\int \delta^{i_{1}j_{1}}\cdots\delta^{i_{s}j_{s}} \partial_{i_{1}}\cdots \partial_{i_{s}}L\partial_{a}\partial_{j_{1}}\cdots\partial_{j_{n}}v^{a}\\
		&\quad -2t^{-\alpha}(v\partial^{s}L,\partial^{s+1}L)_{L^{2}}+2t^{-\alpha}\int  \partial_{i_{1}}\cdots \partial_{i_{s}}L\mathfrak{F}^{I(i_{1},\dots, i_{s})}_{L}.
	\end{align*}
	
	We also calculate the evolution of the mixed term, 
	
	\begin{align*}
		\frac{d}{dt}(\partial^{s-1}v,\partial^{s}L)_{L^{2}}&=-t^{-\alpha}(v\partial^{s-1}v,\partial^{s+1}L)_{L^{2}}+t^{-\alpha}\int  \delta_{ij}\partial_{i_{1}}\cdots \partial_{i_{s-1}}v^{i}\mathfrak{F}^{I(i_{1},\dots, i_{s-1},j)}_{L}\\
		&\quad-t^{-\alpha}\int \delta^{i_{1}j_{1}}\cdots \delta^{i_{s-1}j_{s-1}}\partial_{i_{1}}\cdots \partial_{i_{s-1}}v^{i}\partial_{a}\partial_{j_{1}}\cdots \partial_{j_{s-1}}\partial_{i}v^{a}\\
		&\quad -Kt^{-\alpha}\|L\|_{\dot{H}^{s}}^{2}-t^{-\alpha}(\partial^{s}v,v\partial^{s}L)_{L^{2}}-\alpha t^{-1}(\partial^{s-1}v,\partial^{s}L)_{L^{2}}\\
		&\quad + t^{-\alpha}(\partial^{s}\phi,\partial^{s}L)_{L^{2}}+t^{-\alpha}\int  \partial_{i_{1}}\cdots \partial_{i_{s}}L\mathfrak{F}^{i_{s},I(i_{1},\dots, i_{s-1})}_{v}.
	\end{align*}

	\subsubsection{Preliminary estimates of higher-order terms}
	
	Before deriving an energy estimate for higher-order energies, we recall some basic inequalities from function space theory. 
	
	\begin{lemma}[Moser-type estimates]\label{lemma:Moser}
		Suppose that $\ell>1$ is an integer and $\beta$ a multi-index with $|\beta|=\ell$. Furthermore, assume that $f$ and $g$ are functions with $f,g\in H^{\ell}\cap W_{1,\infty}$. Then
		\begin{equation*}
			\|[\partial^{\beta},f\partial ]g\|_{L^{2}}\lesssim \|\partial f\|_{L^{\infty}}\|\partial g\|_{H^{\ell-1}}+\|\partial g\|_{L^{\infty}}\|\partial f\|_{H^{\ell-1}}.
		\end{equation*}
		If additionally, $F\in C^{\infty}$ with $F(0)=0$ and $\ell>\frac{d}{2}$, there exists a continuous function $C:[0,\infty)\to [0,\infty)$, such that
		\begin{equation*}
			\|F(f)\|_{H^{\ell}}\leq C(\|f\|_{L^{\infty}})\|f\|_{H^{\ell}}. 
		\end{equation*}
	\end{lemma}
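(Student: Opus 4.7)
The plan is to establish both bounds by the classical Moser--Nirenberg interpolation argument, which rests on the generalized Leibniz rule, the Faà di Bruno chain rule, and the Gagliardo--Nirenberg interpolation inequality on $\mathbb{T}^{3}$. Since Lemma~\ref{lemma:Moser} is entirely standard in the theory of quasilinear symmetric hyperbolic systems, my expectation is either to give a very brief self-contained derivation or to defer to classical references such as Majda's monograph on compressible flow or Taylor's \emph{Partial Differential Equations III}.

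For the commutator estimate, I would expand
\begin{equation*}
[\partial^{\beta}, f\partial]g = \partial^{\beta}(f\partial g) - f\,\partial^{\beta}\partial g
\end{equation*}
using the higher-order Leibniz rule: the term in which all derivatives fall on $\partial g$ cancels exactly with $f\,\partial^{\beta}\partial g$, leaving a finite sum of products $\binom{\beta}{\gamma}\,\partial^{\gamma}f\cdot\partial^{\beta-\gamma}\partial g$ indexed by multi-indices $\gamma$ with $1\leq|\gamma|\leq\ell$. Each summand is estimated in $L^{2}$ by Hölder's inequality with a conjugate pair $(p,q)$ satisfying $1/p+1/q=1/2$, and then by the Gagliardo--Nirenberg inequality
\begin{equation*}
\|\partial^{j}h\|_{L^{2\ell/j}} \lesssim \|h\|_{L^{\infty}}^{1-j/\ell}\,\|\partial^{\ell}h\|_{L^{2}}^{j/\ell},
\end{equation*}
applied to $h=\partial f$ and $h=\partial g$ after shifting indices so that an explicit derivative is pulled outside. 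Combining the resulting exponents via Young's inequality delivers precisely the mixed bound stated.

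For the composition estimate, I would proceed by induction on $\ell$ using the Faà di Bruno formula, which writes $\partial^{\beta}F(f)$ as a sum of terms of the form $F^{(k)}(f)\prod_{i}\partial^{\gamma_{i}}f$ with $\sum_{i}\gamma_{i}=\beta$ and $|\gamma_{i}|\geq 1$. Because $F\in C^{\infty}$ with $F(0)=0$, one has $\|F^{(k)}(f)\|_{L^{\infty}}\leq C_{k}(\|f\|_{L^{\infty}})$, while each product of derivative factors is controlled in $L^{2}$ by the same Gagliardo--Nirenberg interpolation as above. The Sobolev threshold $\ell>d/2$ enters both to secure the embedding $H^{\ell}\hookrightarrow L^{\infty}$ and to let the lowest-order factors be absorbed cleanly. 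Packaging the constants as a continuous function of $\|f\|_{L^{\infty}}$ then yields the stated inequality.

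The only real obstacle is the combinatorial bookkeeping of multi-indices and interpolation exponents generated by the Leibniz and Faà di Bruno expansions, together with tracking the dependence on $\|f\|_{L^{\infty}}$ of the constant $C$ in the composition estimate. This is routine but tedious, and since the lemma is universally standard, the natural presentation is to state it and invoke a classical source.
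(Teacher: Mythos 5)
Your proposal is correct and matches the paper's treatment: the paper gives no proof at all and simply cites \cite[Appendix C]{serre2007}, and your sketch (Leibniz expansion with cancellation of the top-order term, H\"older plus Gagliardo--Nirenberg interpolation, and Fa\`a di Bruno for the composition) is precisely the standard argument found in such references. Your closing suggestion to state the lemma and invoke a classical source is exactly what the authors do.
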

	For a proof of Lemma \ref{lemma:Moser}, see, e.g., \cite[Appendix C]{serre2007}.

	\begin{lemma}\label{lemma:higherorderestimate}
		There exists a monotonously increasing function $ C $ with $ C(1)=1 $ and a constant $ C^{\prime}>0 $, such that the higher-order energies defined in \eqref{eq:higherorderenergy} satisfy the bound
		\begin{align*}
			\frac{d}{dt}\mathcal{E}_{s}[v,L]&\leq -\alpha t^{-1}\mathcal{E}_{s}[v,L]+C^{\prime}(t^{-\alpha}\mathcal{E}_{s}[v,L]^{\frac{3}{2}}+t^{-2\alpha}\mathcal{E}_{s}[v,L])\\
			&\quad+t^{-2\alpha}C(1+\|L\|_{\infty})\left(\|v\|_{H^{s}}\|\partial L\|_{H^{s-1}}+t^{-1+\alpha}\|L\|_{H^{s}}\|\partial L\|_{H^{s-2}}\right).
		\end{align*}
	\end{lemma}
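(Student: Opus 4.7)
The strategy is to replicate the decay mechanism of Lemma \ref{lemma:firstorder} at every order $\ell\in\{2,\dots,s\}$. For each such $\ell$, I would add the three identities just derived for $\frac{d}{dt}\|v\|_{\dot H^\ell}^2$, $K\frac{d}{dt}\|L\|_{\dot H^\ell}^2$ and $\frac{d}{dt}(\partial^{\ell-1}v,\partial^\ell L)_{L^2}$ after multiplying the last by $ct^{-1+\alpha}$ with the same choice $c=\alpha$ as before, and then sum over $\ell$ and combine with the first-order estimate to produce $\frac{d}{dt}\mathcal{E}_s$. The linear damping on the right-hand side then emerges exactly as in the first-order case: the friction $-\alpha t^{-1}\partial^I v^i$ yields $-2\alpha t^{-1}\|v\|_{\dot H^\ell}^2$, while the term $-Kt^{-\alpha}\|L\|_{\dot H^\ell}^2$ sitting inside the mixed-term identity is promoted by the prefactor $\alpha t^{-1+\alpha}$ to the required $-\alpha K t^{-1}\|L\|_{\dot H^\ell}^2$; the time derivative of the prefactor combined with the explicit $-\alpha t^{-1}$ friction inside the mixed-term identity delivers the corresponding $-t^{-1}$ damping of the corrector itself.

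The remaining terms I would dispose of in three families. First, the top-order pairs $-2Kt^{-\alpha}(\partial^\ell v,\partial^{\ell+1}L)_{L^2}$ from the $v$-energy and the analogous expression coming from $K\|L\|_{\dot H^\ell}^2$ cancel identically after a single integration by parts, reproducing the annihilation of the $I_1$-term in Lemma \ref{lemma:firstorder}. Second, the transport expressions $-2t^{-\alpha}(v\partial^\ell v,\partial^{\ell+1}v)_{L^2}$ and $-2t^{-\alpha}(v\partial^\ell L,\partial^{\ell+1}L)_{L^2}$ are symmetrized by integration by parts (using the ambiguity-free contraction of Remark \ref{remark:ambiguity}), losing one derivative and becoming $\lesssim t^{-\alpha}\|\partial v\|_{L^\infty}\mathcal{E}_s$; the mixed-term transport pieces are handled analogously. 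The commutator residuals $\mathfrak F_L^I$ and $\mathfrak F_v^{i,I}$ are controlled by the Moser inequality of Lemma \ref{lemma:Moser} and, after pairing with $\partial^\ell v$ or $\partial^\ell L$ in $L^2$ and invoking Sobolev embedding (valid since $s\geq 3$), produce contributions of the form $t^{-\alpha}\mathcal{E}_s^{3/2}$, yielding the cubic error term in the claim.

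Third, the gravitational contributions $2t^{-\alpha}(\partial^\ell v,\partial^{\ell+1}\phi)_{L^2}$ from the $v$-energy and $\alpha t^{-1}(\partial^\ell L,\partial^\ell\phi)_{L^2}$ from the corrector evolution I would control by combining the elliptic estimate \eqref{eq:estimatephiH2} with the Moser bound $\|\rho-\bar\rho\|_{H^{\ell-1}}\lesssim t^{-3\alpha}(1+C(\|L\|_{L^\infty}))\|\partial L\|_{H^{\ell-1}}$ obtained from the second part of Lemma \ref{lemma:Moser} applied to $\rho-\bar\rho=t^{-3\alpha}(\exp L-\overline{\exp L})$. After Cauchy--Schwarz these generate exactly the anomalous terms $t^{-2\alpha}\|v\|_{H^s}\|\partial L\|_{H^{s-1}}$ and $t^{-1-\alpha}\|L\|_{H^s}\|\partial L\|_{H^{s-2}}$ appearing in the statement, and any remaining $\phi$-pairings treated by Cauchy--Schwarz--Young fold into the residual $t^{-2\alpha}\mathcal{E}_s$. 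I expect the main obstacle to be careful bookkeeping: checking at each order $\ell$ that no term carries more than $s+1$ derivatives without a compensating integration by parts, and that the Moser estimate is invoked at the precise regularity needed to preserve the sharp $t^{-2\alpha}$ weight on the gravitational error rather than losing it to a cruder $\mathcal{E}_s$ bound that would spoil the decay analysis further down.
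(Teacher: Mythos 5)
Your proposal follows essentially the same route as the paper's proof: order-by-order integration by parts to cancel the top-order $(s+1)$-derivative pairings and symmetrize the transport terms, Moser estimates plus Sobolev embedding for the commutators yielding the $t^{-\alpha}\mathcal{E}_s^{3/2}$ error, and the elliptic estimate \eqref{eq:estimatephiH2} combined with a composition estimate on $\exp(L)$ for the gravitational terms, which produces exactly the two anomalous terms in the statement. The bookkeeping concerns you flag are the right ones, and the argument closes as you describe.
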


	\begin{proof}
		Without loss of generality, we investigate the evolution of the top-order energy $ E_{s} $. The arguments can be applied verbatim to $ E_{k} $, for $1<  k< s $. The cancellation of terms of order $ s+1 $ is due to integration by parts, as is the case in the first-order energy. The decay inducing terms are of a similar form to the first-order case, and the argument regarding the decay rate is analogous. The term involving
		\begin{equation*}
			(v\partial^{s}v,\partial^{s+1}v)_{L^{2}}
		\end{equation*}
		can simply be integrated by parts. Hence, an application of Sobolev embedding yields that
		\begin{equation*}
			(v\partial^{s}v,\partial^{s+1}v)_{L^{2}}\lesssim \|\partial v\|_{L^{\infty}}\|\partial v\|_{H^{s-1}}^{2}\leq \mathcal{E}_{s}^{\frac{3}{2}}.
		\end{equation*}
		The same is true for $ (v\partial^{s}L,\partial^{s+1}L)_{L^{2}} $. Terms like 
		\begin{equation*}
			(\partial^{s}v,v \partial^{s}L)_{L^{2}}
		\end{equation*}
		are purely perturbative and can be estimated roughly in the same manner. The only terms still left to analyze are those involving the potential $ \phi $ and the commutator terms. We start with the latter. We use standard Moser-type estimates and apply the Sobolev inequality to find
		\begin{equation*}
			\|\mathfrak{F}_{v}^{I(i_{1},\dots,i_{s})}\|_{L^{2}}\lesssim \|\partial v\|_{L^{\infty}}\|v\|_{H^{s}}\lesssim \mathcal{E}_{s}.
		\end{equation*}
		Note that we have used that
		\begin{equation*}
			\|v\|_{L^{2}}\lesssim |\bar{v}|+\|\partial v\|_{L^{2}}\leq 2E_{1}^{\frac{1}{2}}\leq 2\mathcal{E}_{s}^{\frac{1}{2}}.
		\end{equation*}
		Regarding $ \mathfrak{F}_{L} $, we have to be a bit more careful, as we do not have strong control over $ |\bar{L}| $ the same way we do regarding $ |\bar{v}| $. However, the Moser-type estimate for the commutator in Lemma \ref{lemma:Moser} gives us
		\begin{equation*}
			\|\mathfrak{F}_{L}^{I(i_{1},\dots,i_{s})}\|_{L^{2}}\lesssim \|\partial v\|_{L^{\infty}}\|\partial L\|_{H^{s-1}}+\|\partial L\|_{L^{\infty}}\|v\|_{H^{s}}\lesssim \mathcal{E}_{s}.
		\end{equation*}
		This shows that all terms involving the commutators are purely perturbative of order $ \mathcal{E}^{\frac{3}{2}} $. Additionally, we estimate
		\begin{align*}
			(\partial^{s}v,\partial^{s+1}\phi)_{L^{2}}&\leq \| v\|_{\dot{H}^{s}}\|\partial^{s+1}\phi\|_{L^{2}} \lesssim t^{2\alpha}\|v\|_{\dot{H}^{s}}\|\rho-\bar{\rho}\|_{H^{s-1}}
			\lesssim t^{-\alpha}\|v\|_{\dot{H}^{s}}\|\partial \exp(L)\|_{H^{s-2}}\\
			&\lesssim t^{-\alpha}\|v\|_{\dot{H}^{s}}\|\exp(L)\partial L\|_{H^{s-2}}.
		\end{align*}
	Again, using Moser estimates, this yields 
	\begin{equation*}
		(\partial^{s}v,\partial^{s+1}\phi)_{L^{2}}\lesssim t^{-\alpha}C(1+\|L\|_{\infty})\|v\|_{H^{s}}\|\partial L\|_{H^{s-1}},
	\end{equation*}
	where $ C(\cdot) $ is monotonously increasing with $ C(1)=1 $. Similarly, for the other term involving $ \phi $, we have that
	\begin{align*}
		(\partial^{s}\phi,\partial^{s}L)_{L^{2}}&\leq \|L\|_{H^{s}}\|\partial^{s}\phi\|_{L^{2}}
		\lesssim t^{2\alpha}\|L\|_{H^{s}}\|\rho-\bar{\rho}\|_{H^{s-2}}\\
		&\lesssim t^{-\alpha}\|L\|_{H^{s}}\|\partial \exp(L)\|_{H^{s-3}}\\
		&\lesssim t^{-\alpha}\|L\|_{H^{s}}\|\exp(L)\partial L\|_{H^{s-3}}\\
		&\lesssim t^{-\alpha}C(1+\|L\|_{\infty})\|L\|_{H^{s}}\|\partial L\|_{H^{s-1}}.
	\end{align*}
	\end{proof}
	
	\subsection{Stability proof}
	We formulate the main theorem in the following and present the proof based on the foregoing lemmas subsequently.
	\begin{theorem}\label{thm1}
		Let $ s\in \mathbb{Z}_{\geq 3} $, $ \alpha>\frac{2}{3} $ and $ L_{0}\in \mathbb{R} $. Then, there exists a $ \delta >0 $, such that for all initial data $ (\mathring{L},\mathring{v})\in H^{s}(\mathbb{T}^{3})\times H^{s}(\mathbb{T}^{3}) $ satisfying 
		\begin{equation*}
			\|\partial (\mathring{L}-L_{0})\|_{H^{s-1}}+\|\mathring{v}\|_{H^{s}}<\delta,
		\end{equation*}
		there exists a unique, global solution 
		\begin{equation*}
			 (L,v,\phi)\in C\left([t_{0},\infty);(H^{s}(\mathbb{T}^{3}))^{3}\right)\cap C^{1}\left([t_{0},\infty); (H^{s-1}(\mathbb{T}^{3}))^{3}\right)
		\end{equation*}
		to the system in \eqref{eq:equationsofmotion}, such that
		\begin{equation*}
			L\Big|_{t=t_{0}}=\mathring{L}, \quad v\Big|_{t=t_{0}}=\mathring{v}.
		\end{equation*}
	\end{theorem}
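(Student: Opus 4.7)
The plan is a bootstrap/continuation argument built on Lemma~\ref{lemma:higherorderestimate} and the mean-control estimate \eqref{eq:estimatemeanL}. Local well-posedness (cf.\ the remark after \eqref{eq:systemLv}) supplies a unique solution $(L,v,\phi)$ on a maximal interval $[t_{0},T^{\ast})$, and it suffices to prevent the $H^{s}$ norm of $(L,v)$ from blowing up on $[t_{0},T^{\ast})$ in order to conclude $T^{\ast}=\infty$. On any compact subinterval $[t_{0},T]\subset[t_{0},T^{\ast})$ I install the bootstrap assumptions
\begin{equation*}
\mathcal{E}_{s}[v,L](t)\leq 4A\,\delta^{2}\,(t_{0}/t)^{\alpha}, \qquad |\bar{L}(t)-L_{0}|\leq 1,
\end{equation*}
where $A$ is the coercivity constant of Lemma~\ref{lemma:coercivity}. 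The target decay rate $t^{-\alpha}$ is the one predicted by the damping term $-\alpha t^{-1}\mathcal{E}_{s}$, while the smallness hypothesis combined with coercivity gives $\mathcal{E}_{s}(t_{0})\leq A\delta^{2}$. The aim is to improve both inequalities by a factor of two.

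The first task controls the mean of $L$. From \eqref{eq:estimatemeanL} and coercivity one has $|\dot{\bar L}|\lesssim t^{-\alpha}\mathcal{E}_{s}\lesssim \delta^{2}t^{-2\alpha}$, which is integrable on $[t_{0},\infty)$ since $\alpha>2/3>1/2$; integration yields $|\bar{L}(t)-L_{0}|\lesssim \delta^{2}$, strictly below $1/2$ for $\delta$ small. Together with \eqref{eq:estimateLpointwise} this gives the uniform pointwise bound $\|L\|_{L^{\infty}}\lesssim 1+|L_{0}|$, so that the factor $C(1+\|L\|_{L^{\infty}})$ appearing in Lemma~\ref{lemma:higherorderestimate} is a harmless constant along the flow.

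The central step closes the energy bootstrap. Setting $Y(t):=t^{\alpha}\mathcal{E}_{s}[v,L](t)$, the damping term in Lemma~\ref{lemma:higherorderestimate} exactly cancels the $\alpha t^{\alpha-1}\mathcal{E}_{s}$ coming from the integrating factor, leaving
\begin{equation*}
\dot{Y}(t)\;\leq\;C\bigl(\mathcal{E}_{s}^{3/2}+t^{-\alpha}\mathcal{E}_{s}\bigr).
\end{equation*}
Here the $\phi$-coupling contributions are handled by combining \eqref{eq:estimatephiH2} with the sharper bound $\|\partial^{s}L\|_{L^{2}}\lesssim \mathcal{E}_{s}^{1/2}$ from Lemma~\ref{lemma:coercivity} in the Cauchy-Schwarz step of the proof of Lemma~\ref{lemma:higherorderestimate}, producing a clean $\mathcal{E}_{s}$-bound that avoids the $\|L\|_{H^{s}}$ factor (which would otherwise involve $|\bar L|$ and contribute only linearly in $\delta$). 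Inserting the bootstrap gives $\mathcal{E}_{s}^{3/2}\lesssim \delta^{3}t^{-3\alpha/2}$ and $t^{-\alpha}\mathcal{E}_{s}\lesssim \delta^{2}t^{-2\alpha}$, both integrable on $[t_{0},\infty)$ precisely when $\alpha>2/3$, the first being the binding constraint. Integration then gives $Y(t)\leq Y(t_{0})+C\delta^{3}\leq A\delta^{2}+C\delta^{3}$, which for $\delta$ sufficiently small improves the bootstrap to $Y(t)\leq 2A\delta^{2}$.

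The improved bootstraps control $\|L\|_{H^{s}}\leq|\bar{L}|+\|\partial L\|_{H^{s-1}}$ and $\|v\|_{H^{s}}$ uniformly on $[t_{0},T^{\ast})$, so the standard blowup criterion for the quasilinear symmetric-hyperbolic part is never triggered, forcing $T^{\ast}=\infty$ and delivering the global solution asserted in Theorem~\ref{thm1}; the elliptic regularity of $\phi$ then follows from \eqref{eq:estimatephiH2}. The entire analytic obstacle is concentrated in the cubic transport contribution $t^{-\alpha}\mathcal{E}_{s}^{3/2}$ of Lemma~\ref{lemma:higherorderestimate}: its integral along the natural decay trajectory $\mathcal{E}_{s}\sim t^{-\alpha}$ equals $\int_{t_{0}}^{\infty}s^{-3\alpha/2}\,ds$ up to constants, which converges precisely when $\alpha>2/3$. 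This single integrability threshold is the analytic origin of the critical expansion rate in the theorem.
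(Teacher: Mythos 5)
Your architecture is essentially the paper's: local existence, a bootstrap on a $t^{-\alpha}$-decaying energy, coercivity (Lemma~\ref{lemma:coercivity}), the differential inequality from Lemmas~\ref{lemma:firstorder} and~\ref{lemma:higherorderestimate}, integration of \eqref{eq:estimatemeanL} to pin down $\bar L$ and hence $C(1+\|L\|_{L^\infty})$, and the continuation principle. Your observation that the $\phi$-coupling should be closed with $\|\partial^{s}L\|_{L^{2}}\lesssim\mathcal{E}_{s}^{1/2}$ rather than $\|L\|_{H^{s}}$ (which carries the order-one mean) is a genuine and worthwhile tightening of the statement of Lemma~\ref{lemma:higherorderestimate}. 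The identification of $\int t^{-3\alpha/2}\,dt<\infty \Leftrightarrow \alpha>2/3$ as the source of the threshold is exactly right.

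There is, however, a quantitative gap in the closing step. After setting $Y=t^{\alpha}\mathcal{E}_{s}$ you obtain $\dot Y\leq C\bigl(\mathcal{E}_{s}^{3/2}+t^{-\alpha}\mathcal{E}_{s}\bigr)$ and then claim that integration yields $Y(t)\leq Y(t_{0})+C\delta^{3}$. But by your own bootstrap the second term satisfies $t^{-\alpha}\mathcal{E}_{s}\lesssim\delta^{2}t^{-2\alpha}$, whose time integral is $C_{1}\delta^{2}$ with $C_{1}$ a fixed constant \emph{not} controlled by $\delta$. Hence the honest conclusion is $Y(t)\leq A\delta^{2}+C_{1}\delta^{2}+C\delta^{3}$, and if $C_{1}>A$ this does not improve $4A\delta^{2}$ to $2A\delta^{2}$ no matter how small $\delta$ is. The term linear in $\mathcal{E}_{s}$ must be absorbed multiplicatively: write $\dot Y\leq\bigl(Ct^{-2\alpha}+C\delta t^{-3\alpha/2}\bigr)Y$ and apply Gr\"onwall to get $Y(t)\leq Y(t_{0})\exp(C_{1}+C_{2}\delta)$, then either take the bootstrap constant $M$ large enough that $A e^{2C_{1}}\leq M/2$ (so $M$ depends on the Gr\"onwall exponential, not just on the coercivity constant $A$), or decouple the bootstrap amplitude $\varepsilon$ from the data size $\delta$ and demand $\delta\ll\varepsilon$. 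This is precisely how the paper closes the argument: it keeps the exponential factor $\exp\bigl(\int_{t_{0}}^{t^{*}}C'''t^{-2\alpha}+cC'\varepsilon t^{-\frac{3}{2}\alpha+\frac{\eta}{2}}\,dt\bigr)$ explicit and beats it by shrinking $\delta$ relative to the bootstrap constant. With that repair (a one-line change), your proof is complete and matches the paper's; the auxiliary $\eta>0$ in the paper's rate $t^{-\alpha+\eta}$ is not essential and your exact-rate normalization is fine.
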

	\begin{proof}
		By standard existence theory for hyperbolic-elliptic systems, see, e.g., \cite{brauer1992}, we know that \eqref{eq:equationsofmotion} has a local solution. Given a solution $ (L,v) $, by continuity, we know that for all $ \varepsilon >0 $, there exists a $ t_{*}>t_{0} $ such that a solution satisfies 
		\begin{equation}\label{eq:bootstrap}
			\|\partial (L(t,\cdot)-L_{0}(\cdot))\|_{H^{s-1}}+\|v(t,\cdot)\|_{H^{s}} \leq \varepsilon t^{-\frac{\alpha}{2}+\frac{\eta}{2}},
		\end{equation}
		for some small $ \eta>0 $ to be determined later and for all $ t\in [t_{0},t_{*}) $, provided $ \delta $ is sufficiently small. Using Lemma \ref{lemma:coercivity}, this implies that there exists a constant $ c>0 $, such that 
		\begin{equation*}
			\mathcal{E}_{s}[v,L](t)\leq c^{2} \varepsilon^{2} t^{-\alpha+\eta}
		\end{equation*}
		for all $ t\in [t_{0},t_{*}) $. Combining Lemma \ref{lemma:higherorderestimate} and Lemma \ref{lemma:firstorder}, we find that for $ t\in [t_{0},t_{*}) $
		\begin{equation*}
			\frac{d}{dt}\mathcal{E}_{s}\leq -\alpha t^{-1}\mathcal{E}_{s} +C(1+\|L\|_{\infty})t^{-2\alpha}\mathcal{E}_{s}+C^{\prime}(t^{-\alpha}\mathcal{E}_{s}^{\frac{3}{2}}),
		\end{equation*}
		for some function $ C(\cdot) $ and $ C^{\prime}>0 $. Introducing the rescaled energy $ \tilde{\mathcal{E}}_{s}=t^{\alpha-\eta}\mathcal{E}_{s} $. we immediately read off
		\begin{equation*}
			\frac{d}{dt}\tilde{\mathcal{E}}_{s}\leq -\eta t^{-1}\tilde{\mathcal{E}}_{s} +C(1+\|L\|_{\infty})t^{-2\alpha}\tilde{\mathcal{E}}_{s}+C^{\prime}(t^{-\frac{3}{2}\alpha+\frac{\eta}{2}}\tilde{\mathcal{E}}_{s}^{\frac{3}{2}}),
		\end{equation*}
		Furthermore, by \eqref{eq:estimatemeanL}, we see that
		\begin{equation*}
			\frac{d}{dt}\bar{L}\leq C^{\prime \prime}t^{-\alpha}\mathcal{E}_{s} \leq C^{\prime \prime}C\varepsilon^{2} t^{-2\alpha +\eta}.
		\end{equation*}
		Integrating this equations, we see that, provided $ -2\alpha+\eta<-1 $, we have an upper bound on the size of $ |\bar{L}| $. 
		Hence, all terms on the right-hand side of \eqref{eq:estimateLpointwise} are bounded by a constant, and thus
		\begin{equation*}
			C(1+\|L\|_{L^{\infty}})\leq C^{\prime\prime\prime},
		\end{equation*}
		for all $ t\in [t_{0},t_{*}) $. Combining these considerations, we find that on all of $ t\in [t_{0},t_{*}) $,
		\begin{equation*}
			\frac{d}{dt}\tilde{\mathcal{E}}_{s}\leq \left(C^{\prime\prime\prime}t^{-2\alpha}+cC^{\prime}\varepsilon t^{-\frac{3}{2}\alpha+\frac{\eta}{2}}\right)\tilde{\mathcal{E}}_{s}.
		\end{equation*}
		An application of Grönwall's lemma yields
		\begin{equation*}
			\tilde{\mathcal{E}}_{s}[v,L](t)\leq c^{2} \delta^{2}\exp\left(\int_{t_{0}}^{t^{*}}C^{\prime\prime\prime}t^{-2\alpha}+cC^{\prime}\varepsilon t^{-\frac{3}{2}\alpha+\frac{\eta}{2}}dt\right). 
		\end{equation*}
		Provided $ \delta $ is sufficiently small, this implies a strict improvement over the bootstrap assumption \eqref{eq:bootstrap}. It follows that the bound on the norm is extendable and hence, by the continuation principle the solution is global. For a textbook treatment of the continuation principle for hyperbolic systems see, e.g., \cite{serre2007}. 
	\end{proof}
	
	\begin{remark}
	Note that the sign of the force term in the Euler equation does not affect the arguments in the proof. In consequence, Theorem \ref{thm1} also holds for the Euler-Poisson system with repulsive forces.
	\end{remark}

	\section{Numerical analysis of the Euler equation on expanding backgrounds}
	\label{sec:numerics}
	In this section we study the Euler equations in symmetry reduction to an effective 1+1-dimensional system on an expanding background, i.e., the expansion-normalized variables only depend on one spatial coordinate, $L=L(t,x)$, $v=v(t,x)$. Furthermore, as outlined in the introduction, we neglect the gravitational interaction via the potential $ \phi $. The reduced system then reads
	\begin{equation}\label{eq:pureEuler}
		\begin{cases}
			\partial_{t}L=-t^{-\alpha}v\partial_{x} L-t^{-\alpha}\partial_{x} v ,\\
			\partial_{t}v=-t^{-\alpha}K\partial_{x} L-t^{-\alpha}v \partial_{x} v-\alpha t^{-1}v.\\
		\end{cases}
	\end{equation}
We study the numerical evolution of initial data at $t=1$ of the specific form
\begin{equation}\label{init}
(L,v)|_{t=1}=(0,\varepsilon\cdot\sin(x))
\end{equation}
for $\varepsilon>0$. In the following, we denote by $H_{k}$ the energy functional of order $k\geq 0$,
\begin{equation}
	H_{k}[f](t)\coloneqq \|\partial^{k} f(t,\cdot)\|_{L^{2}}^{2}.
\end{equation}
\subsection{Stable region}
Numerical data confirms the result of the analysis of the previous section. For $\alpha>2/3$ the normalized Sobolev norm $H_4(t)/H_4(1)$ decays in time. More specifically for the choice $K=1/6$ and $\alpha=0.8$ the following figure 
shows the time development of $H_4(t)/H_4(1)$ on the left and $[H_4(t)/H_0(t)]/[H_4(1)/H_0(1)]$ on the right.
\begin{figure}[ht]
		\centering
		\includegraphics[scale=0.8]{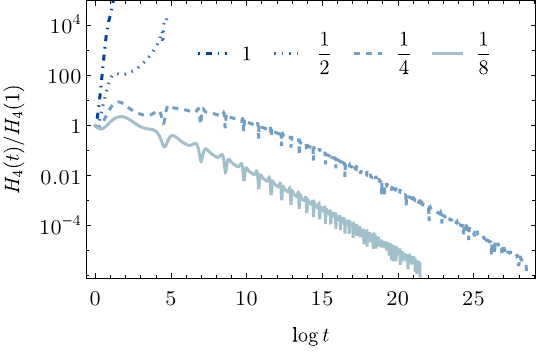}
		\includegraphics[scale=0.8]{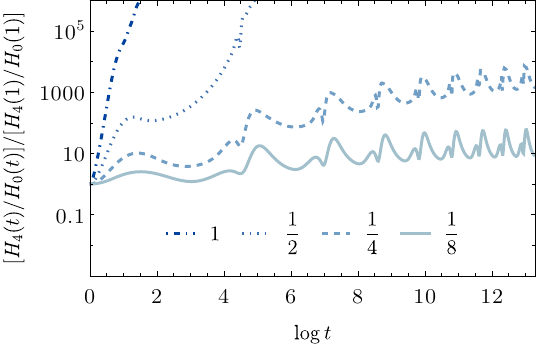}
		\caption{Time-evolution of the Sobolev norms of solutions with initial data of type \eqref{init} with $\varepsilon$ of size as indicated in the respective legend. The data is taken for the specific case of $\alpha=0.8$ and $K=1/6$.}
		\label{fig1}
	\end{figure}
In the left picture the decay is more apparent. The numbers associated to the different graphs correspond to the amplitude $\varepsilon$ of the initial data. The threshold for detecting stability was set to $10^{-6}$.

\subsection{Unstable region}
For $\alpha<2/3$ we study the growth of $H_4(t)/H_0(t)$ normalized by the initial value. It is shown in Figure \ref{fig2}, where $K=1/6$ and $\alpha=0.3$, the legend shows $\varepsilon$. The threshold to detect instability was set to $10^6$.
\begin{figure}[ht]
		\centering
		\includegraphics[scale=0.8]{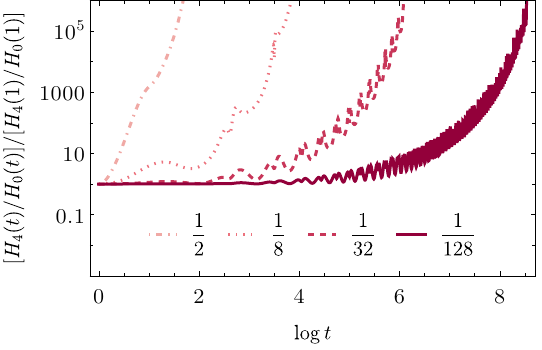}
		\caption{Time-evolution of the Sobolev norms of solutions with initial data of type \eqref{init} with $\varepsilon$ of size as indicated in the respective legend. The data is taken for the specific case of $\alpha=0.3$ and $K=1/6$.}
		\label{fig2}
	\end{figure}

\subsection{Scaling analysis and detection of the critical line}

We apply a scaling analysis, as described in the following, to identify $\alpha=2/3$ as the critical expansion rate.
For fixed $K\leq1/3$ and $\alpha\leq 2/3$ we generate sequence of solutions corresponding to a sequence of initial data \eqref{init} with decreasing $\varepsilon>0$. To each $\varepsilon$ corresponds a time of shock formation $t_*$ defined here according to the instabilty threshold by

\begin{equation}
H_4(t_*)/H_0(t_*)\geq 10^6\cdot H_4(1)/H_0(1).
\end{equation}
\begin{figure}[ht]
		\centering
		\includegraphics[scale=0.55]{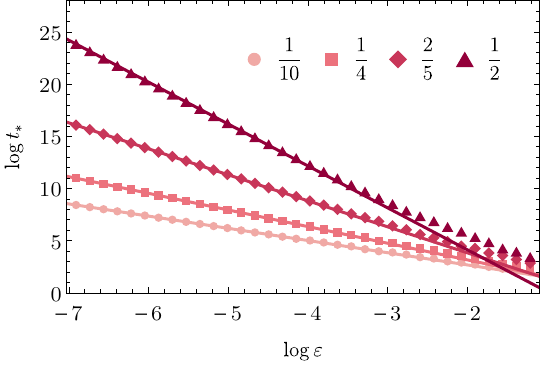}
			\includegraphics[scale=0.55]{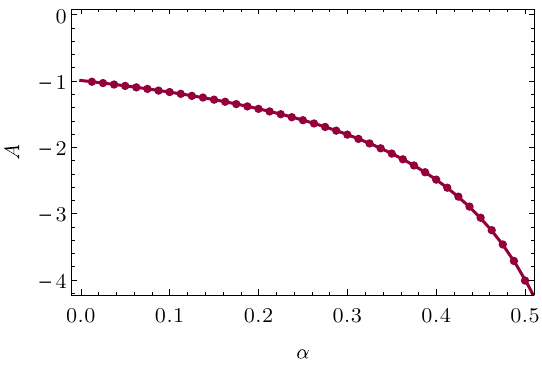}
		\includegraphics[scale=0.55]{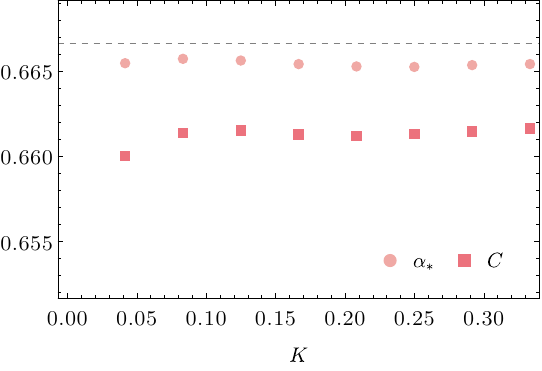}
		\caption{Left: Breaking time depending on size of the initial data for different expansion rates. Center: slope of the breaking time function depending on the expansion rate. Right: Critical expansion rate for different values of $K$.}
		\label{fig3}
	\end{figure}
	
The data, as shown in the leftmost graph in Figure \ref{fig3}, shows the relation

\begin{equation}
\log t_*= A\cdot \log\varepsilon + B
\end{equation}
for suitable constants $A$ and $B$. The legend shows $\alpha$, while $K=1/6$. Other values of $K$ give similar data.

In the next step, the value of the slope $A$ can be read off from the data and can be plotted against the value of the expansion rate, as done in the middle graph of Figure \ref{fig3}. This shows a relation of the form

\begin{equation}
A=\frac{C}{\alpha-\alpha_{\mathrm{crit}}},
\end{equation}
which characterizes the critical expansion rate $\alpha_{\mathrm{crit}}$. A fit of the data for $K=1/6$ (and similar for other admissible values of $K$) yields the following numerical values for $C$ and $\alpha_{\mathrm{crit}}$.

\begin{equation}
\begin{aligned}
C&\approx 0.663 \,...\, 0.665\\
\alpha_{\mathrm{crit}}&\approx 0.666
\end{aligned}
\end{equation}
To test the independence of $K$ cf.~the rightmost plot in Figure \ref{fig3}. The dependence of the time of shock formation on the size of initial data can then be determined (under the assumption of $C=\alpha_{\mathrm{crit}}=2/3$) to
\begin{equation}
t_{\mathrm{crit}}\sim \varepsilon^{-1/(1-3\alpha/2)}.
\end{equation}

	\subsection{Riemann invariants and characteristics }
	
	Finally, we take the complementary perspective on shock formation from the point of view of the physical characteristics of the system.
	We visualize the geometric blowup of solutions to the Euler equations by simulations using the Riemann invariants and plot the corresponding characteristics. The $[0,2\pi]$-domain is repetitively extended for the purpose of the presentation. We stress that this is done to present the reader with a geometric visualization of the shock phenomenon of the previous section, rather than to derive precise blowup times. 
	
	\subsection{Equations of motion of the Riemann-invariants}
		
	The symmetry-reduced equations of motion in the unrescaled fluid coordinates $ (u,\rho) $ take the form
		\begin{equation}\label{eq:eomunrescaled}
					\begin{cases}
						\rho_{t}+u\rho_{x}+\rho u_{x}=-3\frac{\dot{a}}{a}\rho,\\
						u_{t}+\frac{K}{a^{2}\rho}\rho_{x}+uu_{x}=-2\frac{\dot{a}}{a}u+a^{-2}\phi_{x},\\
						a^{-2}\laplacian \phi = -(\rho-\bar{\rho}).
						\end{cases}
			\end{equation}
	
		The coefficient matrix associated with the spatial derivative operator is given by
		\begin{equation*}
				A(t,x,u)=\begin{pmatrix}
						u & \rho \\ \frac{K}{a^{2}\rho} & u
					\end{pmatrix}
			\end{equation*}
		with left eigenvectors $ \ell_{\pm} $ and eigenvalues $ \lambda_{\pm} $ given by 
		\begin{align}
				&\ell_{\pm}=\left(\pm\frac{\sqrt{K}}{\rho},a\right), 
				&\lambda_{\pm}=\pm\frac{\sqrt{K}}{a}+u.
			\end{align}	
		Hence, the Riemann invariants $ R_{\pm} $ are given by
		\begin{equation*}
				R_{\pm}(\rho,u)=\pm \log \rho^{\sqrt{K}} + a u,
			\end{equation*}
		which is straightforward to see, calculating $ \partial R_{\pm} $. Thus,
		\begin{align*}
				u&=\frac{1}{2a}(R_{+}+R_{-}),\\
				\rho&=\exp\left(\frac{1}{2\sqrt{K}}(R_{+}-R_{-})\right).
			\end{align*}
		The PDEs satisfied by $ R_{\pm} $ are given by 
		\begin{equation}\label{eq:Riemann}
				\begin{aligned}
					(\partial_{t}+\lambda_{+}\partial_{x})R_{+}(\rho,u)&=-3\sqrt{K}\frac{\dot{a}}{a}-2\dot{a}u+a^{-1}\partial_{x}\phi,\\
					(\partial_{t}+\lambda_{-}\partial_{x})R_{-}(\rho,u)&=+3\sqrt{K}\frac{\dot{a}}{a}-2\dot{a}u+a^{-1}\partial_{x}\phi.
					\end{aligned}
			\end{equation}
		Hence, along the integral curves $ \gamma_{\pm} $ with characteristic speeds $ \lambda_{\pm} $, i.e.,
		\begin{equation}\label{eq:characteristics}
				\frac{d}{dt}\gamma_{\pm}(t)=\lambda_{\pm}(\rho(t,\gamma_{\pm}(t)),u(t,\gamma_{\pm}(t))), \gamma_{\pm}(0)=\xi \in S_{1},
			\end{equation}
		the equations of motion \eqref{eq:Riemann} reduce to the set of ODEs
		\begin{equation}\label{eq:RiemannODE}
				\begin{aligned}
						\frac{d}{dt}R_{+}(\rho,u)\circ \gamma_{+}(t)&=-3\sqrt{K}\frac{\dot{a}}{a}(t)-2\dot{a}(t)u(t,\gamma_{+}(t))+a^{-1}(t)\partial_{x}\phi(t,\gamma_{+}(t)),\\
						\frac{d}{dt}R_{-}(\rho,u)\circ \gamma_{-}(t)&=+3\sqrt{K}\frac{\dot{a}}{a}(t)-2\dot{a}(t)u(t,\gamma_{-}(t))+a^{-1}(t)\partial_{x}\phi(t,\gamma_{-}(t)).
					\end{aligned}
	\end{equation}

	For the sake of simplicity, we will from now on drop the coupling to the gravitational potential $\phi$. Hence, the equations we simulate are given by the first-order PDE-system
	
	\begin{equation}\label{eq:Riemannnophi}
		\begin{aligned}
			(\partial_{t}+\lambda_{+}\partial_{x})R_{+}(\rho,u)&=-3\sqrt{K}\frac{\dot{a}}{a}-2\dot{a}u,\\
			(\partial_{t}+\lambda_{-}\partial_{x})R_{-}(\rho,u)&=+3\sqrt{K}\frac{\dot{a}}{a}-2\dot{a}u.
		\end{aligned}
	\end{equation}
	
	\subsection{Visualization of the characteristics}
	
	Similar to the previous section, we study solutions launched by initial data of the type
	\begin{equation}
		(L,v)|_{t=1}=(0,\varepsilon\cdot\sin(x)).
	\end{equation}

	\begin{figure}[ht!]
		\centering
		\includegraphics[scale=0.3]{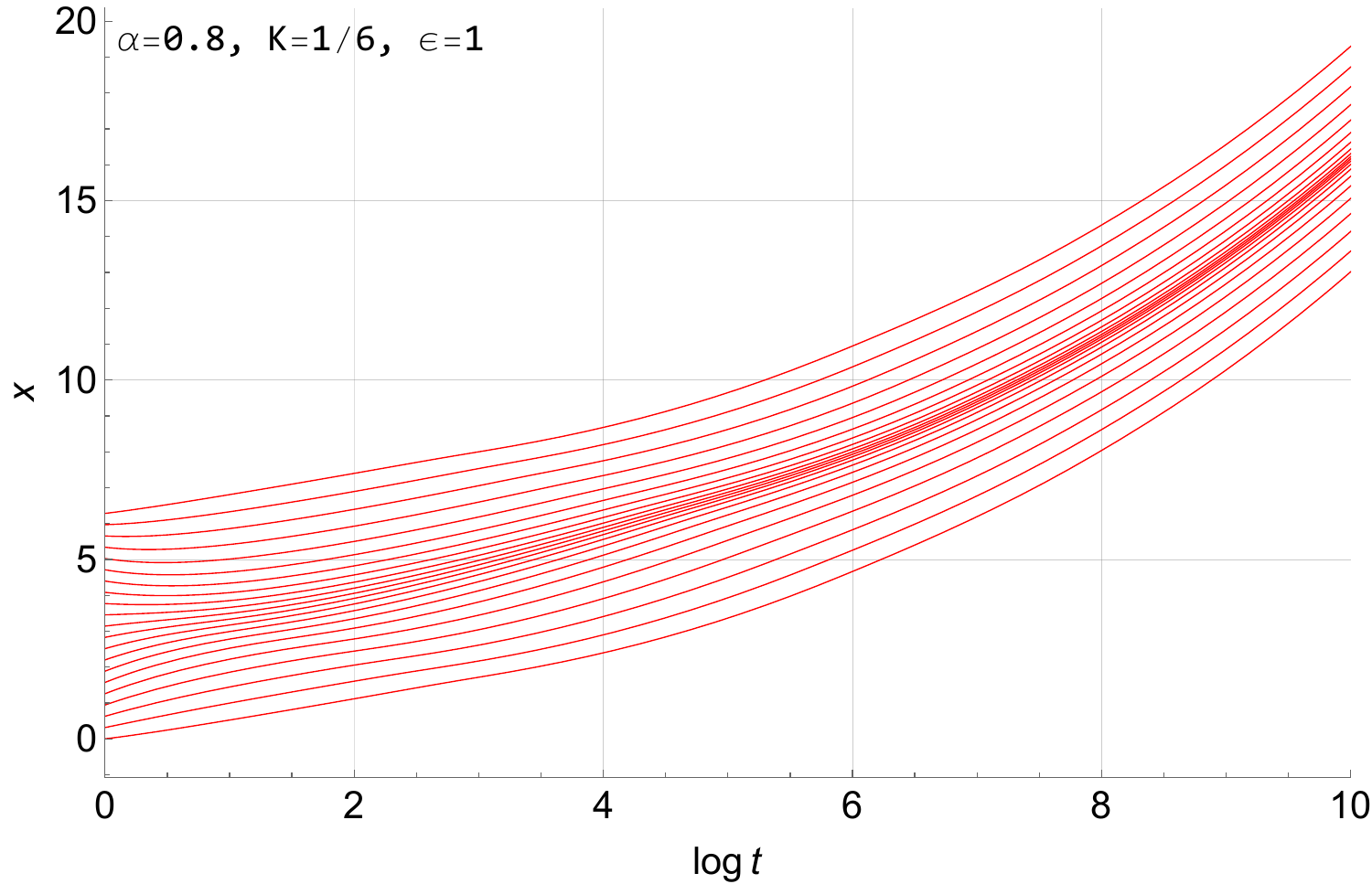}
		\includegraphics[scale=0.3]{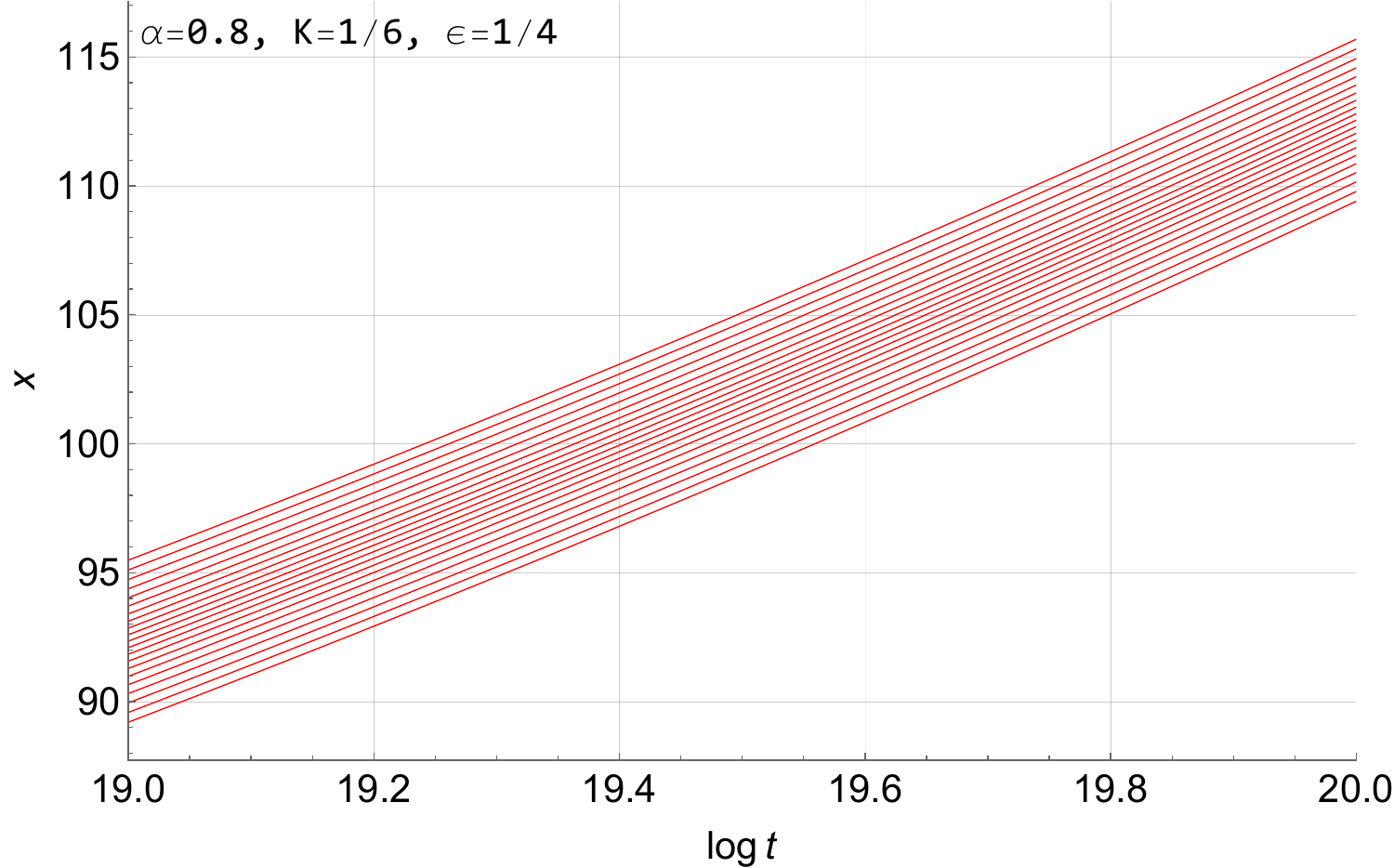}
		\caption{\label{fig5} These pictures depict the physical characteristics for $\alpha=0.8$ and $K=1/6$. The amplitude of the left and right plot is given by $\varepsilon=1$ and $\varepsilon=1/4$, respectively. Note that, for the sake of presentation, the flow of the characteristics is plotted in the universal covering space of $S_{1}$, i.e., $\mathbb{R}$.}
	\end{figure}
	
	Figure \ref{fig5} shows simulations for $\alpha=0.8$ and $K=1/6$. In the left picture, we see an amplitude of $\varepsilon=1$ where the characteristics accumulate slowly. In the right picture, which has amplitude $\varepsilon=1/4$, the density of the characteristics homogenizes rapidly. Note that, in the second picture, we are only presented with a short time frame in the later part of the evolution.

	\begin{figure}[ht!]
		\centering
		\includegraphics[scale=0.3]{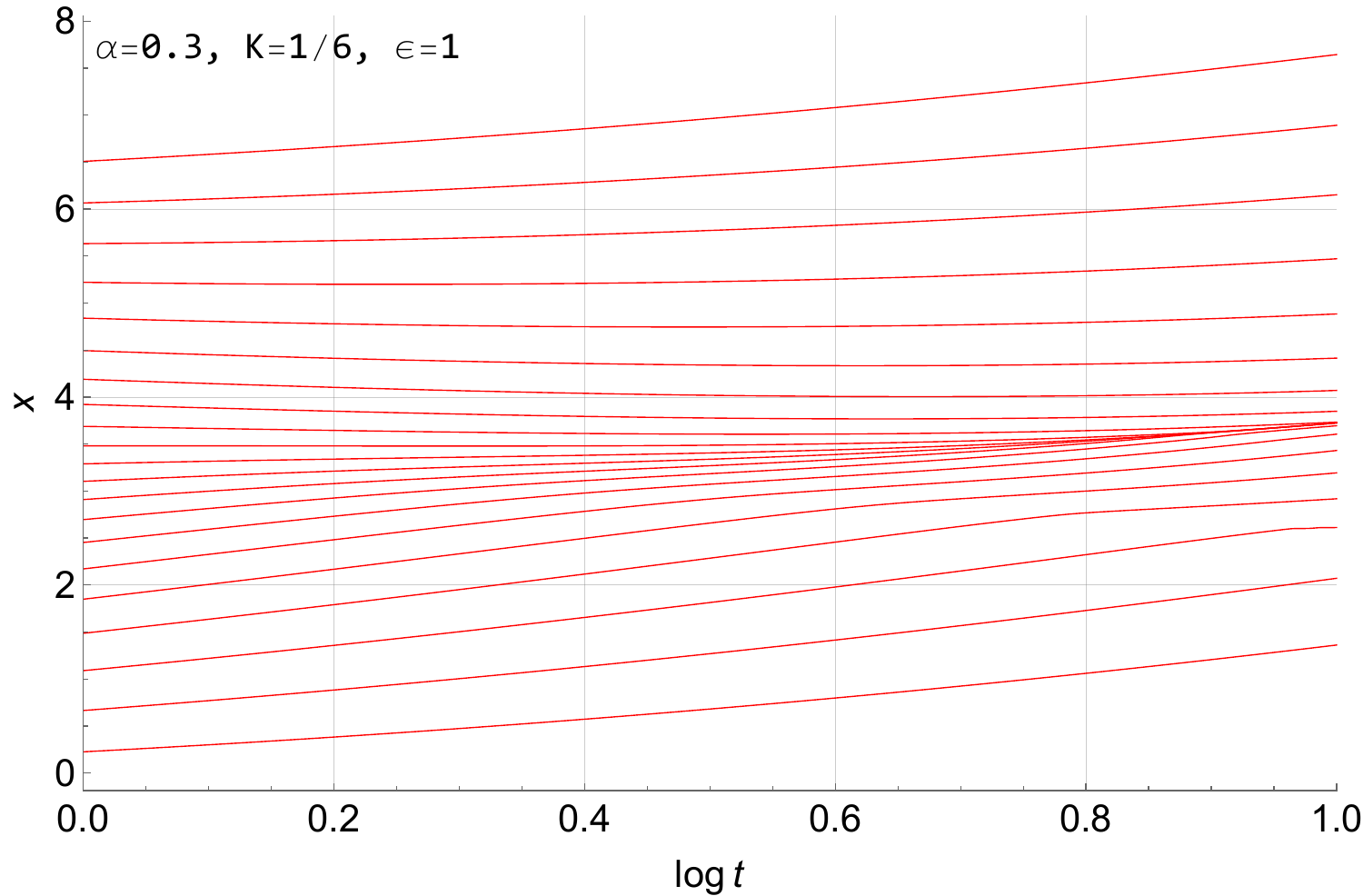}
		\includegraphics[scale=0.3]{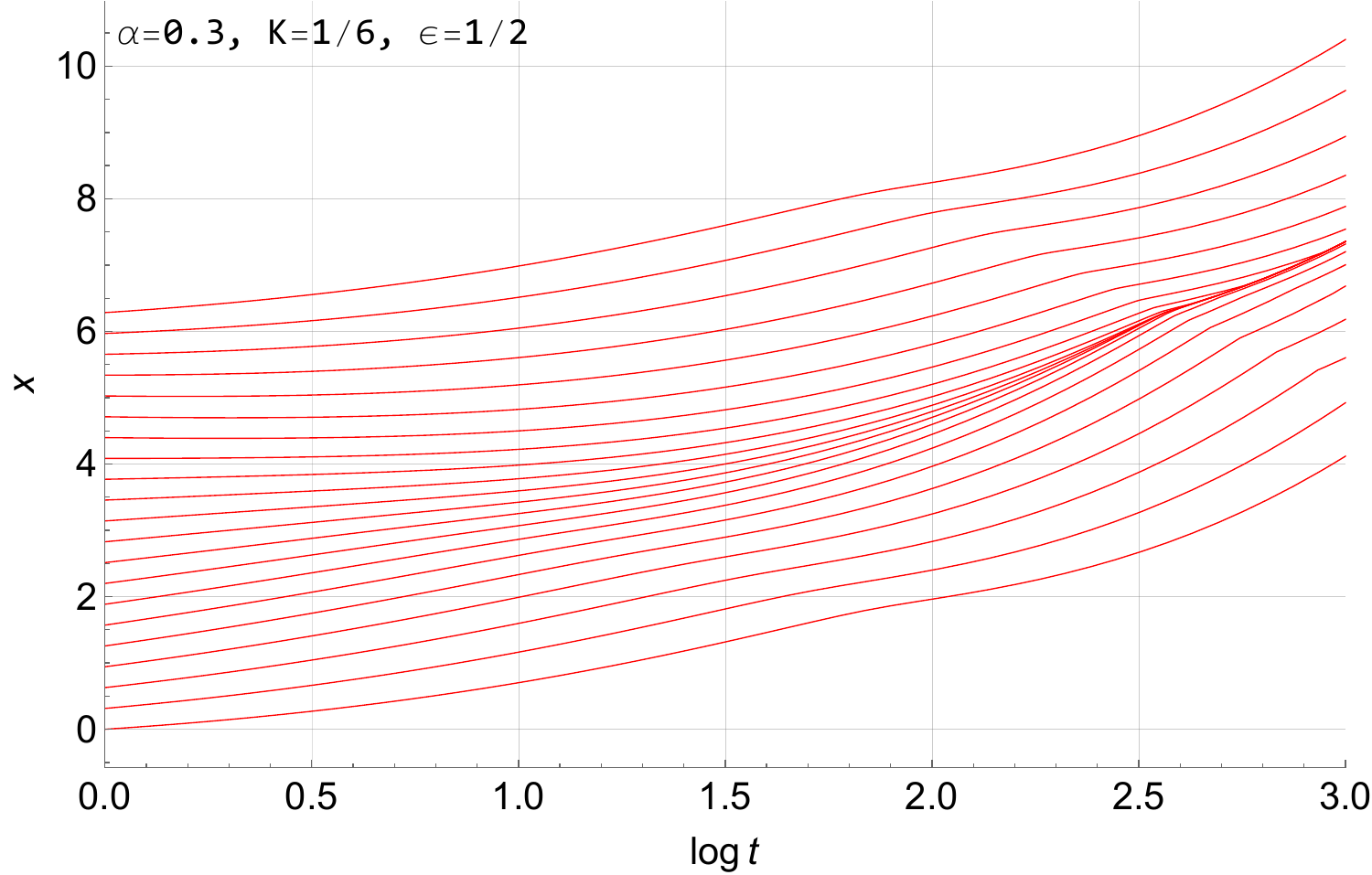}
		\includegraphics[scale=0.3]{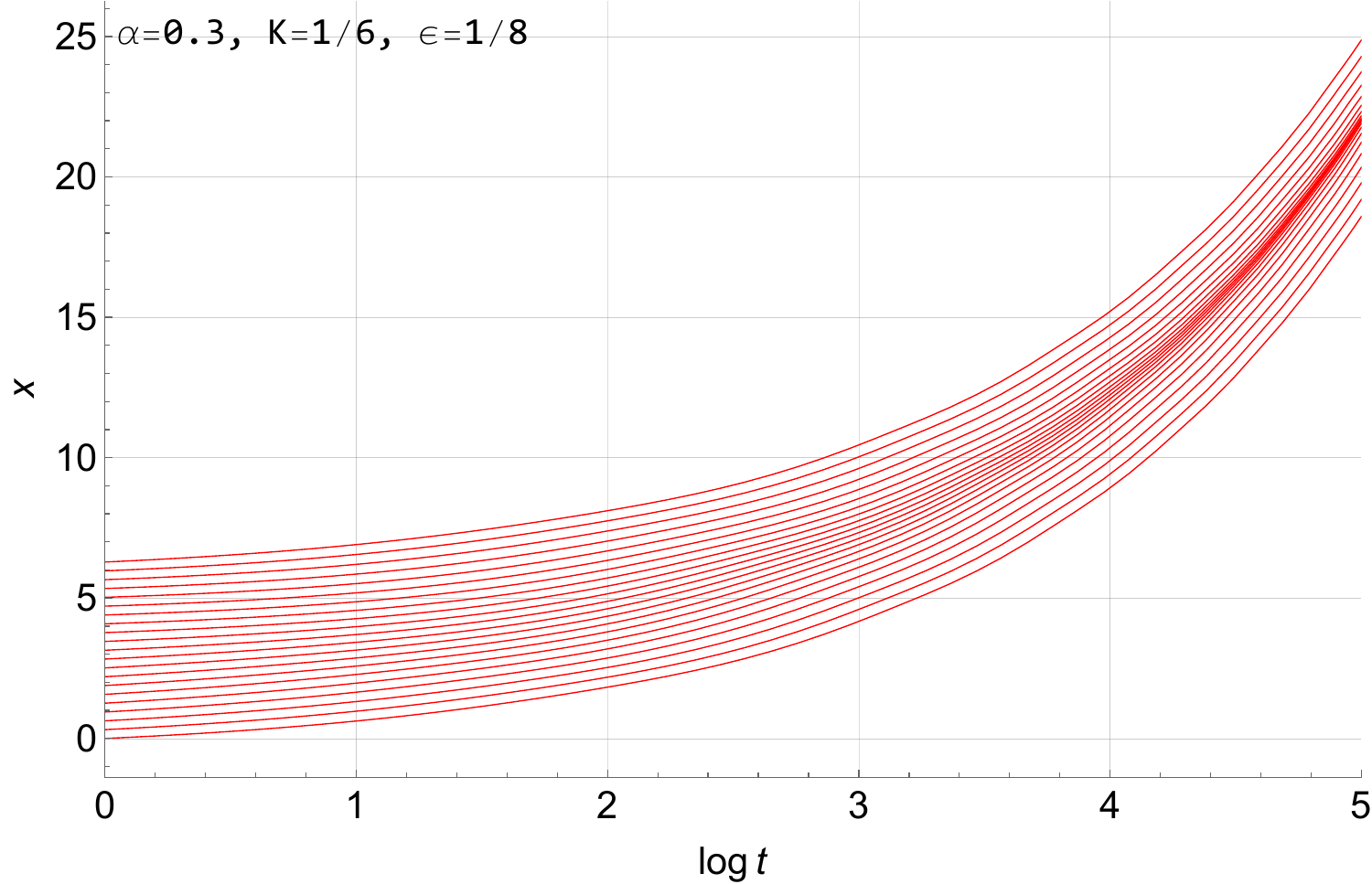}
		\includegraphics[scale=0.3]{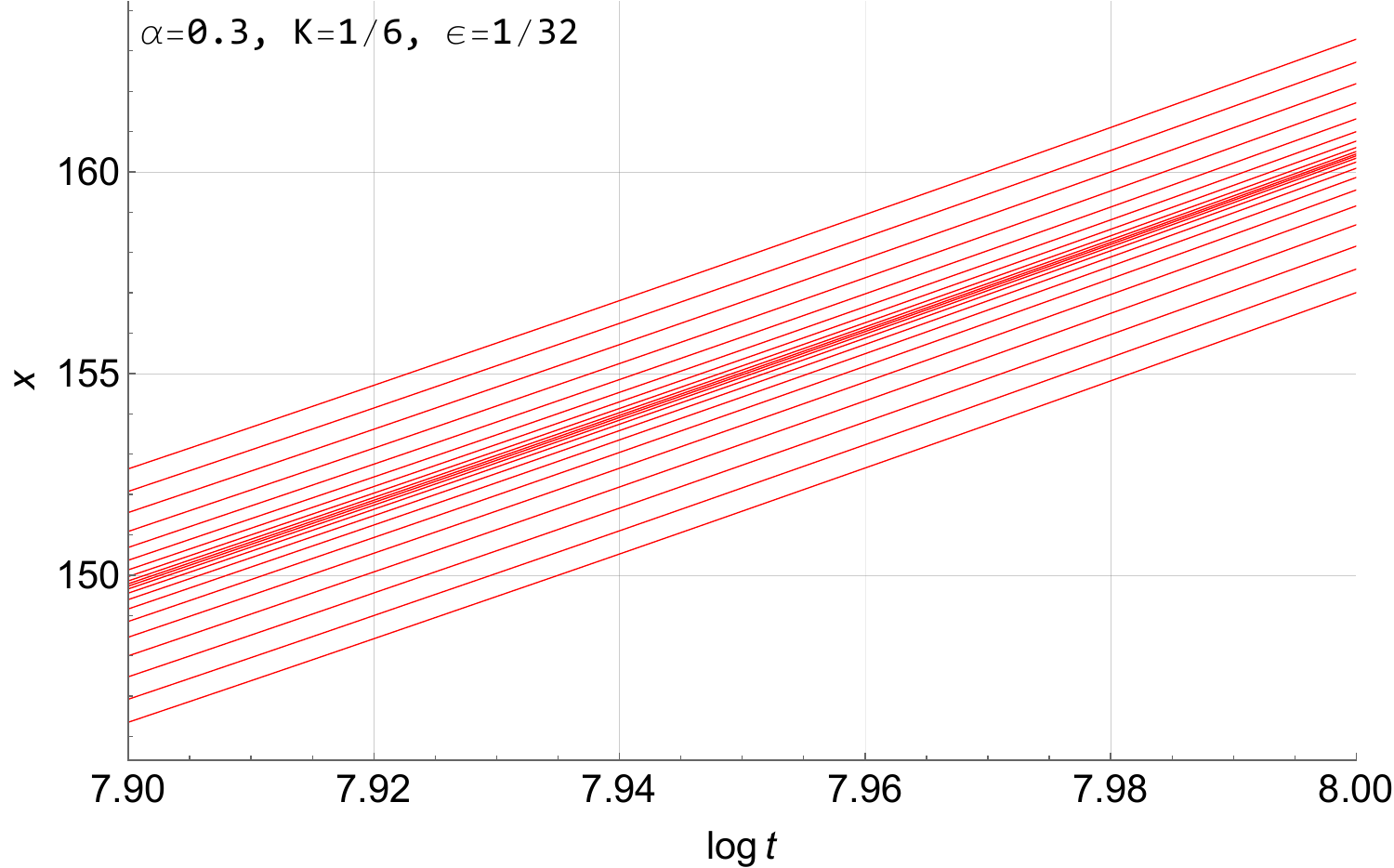}
		\caption{\label{fig6}These pictures depict the physical characteristics for $\alpha=0.3$ and $K=1/6$. The amplitude from left to right and top to bottom is given by $\varepsilon=1$, $\varepsilon=1/2$, $\varepsilon=1/8$ and $\varepsilon=1/32$, respectively. }
	\end{figure}

		In Figure \ref{fig6}, we see runs for $\alpha=0.3$. Top left and right show an amplitude of $\varepsilon=1$ and $\varepsilon=1/2$ in which the characteristics accumulate rapidly. In the bottom left plot, we see the characteristics for $\varepsilon=1/8$, which still concentrate fairly quickly. In the last picture we see the characteristics for an amplitude of $1/32$. There is a clear blowup of the density of the characteristics, but it is rather slow. Note that this is only a short time interval at the end of the simulation.

\subsection{Conclusion of the numerical experiments}
We have performed the numerical experiments for the Euler equations without the additional coupling to the potential. In the presence of a potential the numerical data shows a less definitive behaviour in the unstable regime with the present techniques. Nevertheless, for the Euler equations the numerical study demonstrates  that the critical threshold for stabilization is indeed located at $\alpha_{\mathrm{crit}}=2/3$. This matches with the analytical results in Theorem 1, which indicates that the analytical result is sharp as well as that the numerical studies provide accurate data.

\section{Outlook}
The stability result presented in Theorem \ref{thm1} considers solutions to the Euler-Poisson system on expanding tori \red{as regularly used in cosmology}. The details of the proof show that the force term caused by the gravitational potential in the Euler equation decays sufficiently fast and hence does not significantly affect the dynamics of the fluid in the small data regime. In particular, its sign is not relevant in the proof. This implies that Theorem \ref{thm1} also holds for the case of repulsive forces between the fluid particles. Whether repulsion indeed affects shock formation in the unstable regime is an interesting open question, which would require a more refined numerical study, which is now the subject of a study in progress.
	
\printbibliography

\begin{tabular}[h]{lll}
David Fajman \& Maximilian Ofner, &Maciej Maliborski &Todd Oliynyk \\
Faculty of Physics, &Institute of Analysis &School of Mathematics  \\ 
University of Vienna, &and Scientific Computing&9 Rainforest Walk \\
Boltzmanngasse 5, &Technical University Vienna&Monash University VIC 3800 \\
1090 Vienna, Austria. &Wiedner Hauptstraße 8-10,
&Australia \\
David.Fajman@univie.ac.at \& &1040 Vienna, Austria&todd.oliynyk@monash.edu     \\
maximilian.ofner@univie.ac.at &maciej.maliborski@tuwien.ac.at &\\
&&\\
  Zoe Wyatt&& \\
Department of Pure Mathematics &&  \\ 
 and Mathematical Statistics,&& \\
 Wilberforce Road,&&  \\
 Cambridge, CB3 0WB, U.K.&& \\
 zoe.wyatt@maths.cam.ac.uk  &&     \\

\end{tabular}

\end{document}